\theoremstyle{plain}
\newtheorem{theorem}{Theorem}[section]
\newtheorem{corollary}[theorem]{Corollary}
\newtheorem{proposition}[theorem]{Proposition}
\newtheorem{lemma}[theorem]{Lemma}
\theoremstyle{definition}
\newtheorem{definition}[theorem]{Definition}
\newtheorem{example}[theorem]{Example}
\newtheorem{remark}[theorem]{Remark}
\newtheorem{question}[theorem]{Question}
\newcommand{\enm}[1]{\ensuremath{#1}}          %
\newcommand{\cal}[1]{\mathcal{#1}}
\newcommand{\CC}{\enm{\mathbb{C}}}
\newcommand{\NN}{\enm{\mathbb{N}}}
\newcommand{\PP}{\enm{\mathbb{P}}}
\newcommand{\JJ}{\enm{\mathbb{J}}}
\newcommand{\Ff}{\enm{\cal{F}}}
\newcommand{\Ii}{\enm{\cal{I}}}
\newcommand{\Jj}{\enm{\cal{J}}}
\newcommand{\Oo}{\enm{\cal{O}}}
\newcommand{\Ss}{\enm{\cal{S}}}
\newcommand{\Uu}{\enm{\cal{U}}}
\newcommand{\Vv}{\enm{\cal{V}}}
\renewcommand{\phi}{\varphi}
\renewcommand{\theta}{\vartheta}
\renewcommand{\epsilon}{\varepsilon}
\DeclareMathOperator{\gen}{gen}
\DeclareMathOperator{\reg}{reg}
\renewcommand{\to}[1][]{\xrightarrow{\ #1\ }}
\newcommand{\old}[1]{}
\date{}
\title[]{Entry loci and ranks}
\author{Edoardo Ballico and Emanuele Ventura}
\address{Universit\`a di Trento, 38123 Povo (TN), Italy}
\email{edoardo.ballico@unitn.it}
\address{Dept. of Mathematics, Texas A\&M University,
College Station, TX 77843-3368, USA}
\email{eventura@math.tamu.edu, emanueleventura.sw@gmail.com}
\keywords{Entry loci, $X$-ranks, Secant varieties, Segre loci.}
\subjclass[2010]{(Primary) 14N05; (Secondary) 14H99, 14N25. }
\begin{document}

\maketitle

\begin{abstract}
We study entry loci of varieties and their irreducibility from the perspective of $X$-ranks with respect to a projective variety $X$. 
These loci are the closures of the points that appear in an $X$-rank decomposition of a general point in the ambient space. We look at 
entry loci of low degree normal surfaces in $\PP^4$ using Segre points of curves; the smooth case was classically studied
by Franchetta. 
We introduce a class of varieties whose generic rank coincides with the one of its general entry locus, and show that any smooth 
and irreducible projective variety admits an embedding with this property. 
\end{abstract}

\section{Introduction}

Given a projective variety $X\subset \PP^r$ and a point $q\in \PP^r$, the $X$-rank of $q$, denoted $\mathrm{rk}_X(q)$, is the minimal number of points  $p_1,\ldots, p_s\in X$ required to span $q$.  

The $X$-rank is a generalization of matrix rank and of the more recently studied tensor and Waring ranks. The recent burst of results on the latter ones has reinvigorated the interest in classical questions linked to them, e.g. the study of tangents, secants and their defectivity issues \cite{r3,z}. However, other objects in classical algebraic geometry may have natural interpretations in terms of ranks: studying them could shed light, provide new interesting insights and perspectives to $X$-ranks.  This is the case for the entry locus of a general point in the ambient space of $X$. 

Let $X\subset \PP^r$ be a projective variety and $q\in \PP^r$ a general point; the entry locus $\Gamma_q(X)$ of $q$ is the closure of the set of all points $p\in X$ appearing in an $X$-rank decomposition of $q$. We believe that a better understanding 
of entry loci would be a valuable source of information for the embedded projective geometry of $X$ and relevant for a deeper analysis of rank questions. 

The entry locus is a natural object associated to decompositions of $q$, and so naturally related to varieties of sums of powers ($\mathrm{VSP}$'s, for short) of $q$. The latter ones have attracted a lot of attention since the work of Mukai \cite{Mukai1, Mukai2}. Thereafter, several authors gave more instances of remarkable geometric nature, see e.g. \cite{rs}.

When $\mathrm{rk}_X(q)=2$, the corresponding entry loci have been already shown to play a significant role in reflecting important projective properties of $X$. Ionescu and Russo \cite{r2,ir} studied smooth varieties with ({\it local}) {\it quadratic entry locus} (the so-called $QEL$ manifolds and their local version, the $LQEL$ manifolds) and established several results about them. It is an interesting class of varieties; for instance, they are rationally connected and have extremal tangential behavior. 

We propose the possibly daunting study of higher entry loci, i.e. $\mathrm{rk}_X(q)>2$. One of our main long-term goals is to determine whether remarkable properties and structures appear for entry loci as well. This work is a first attempt to that. Similarly to the context of $\mathrm{VSP}$'s, there is a lack of explicit examples and methods to attack general situations. For instance, one of the basic questions we wonder is what conditions are favorable in order to detect irreducibility of entry loci (Question \ref{q1}); we provide an answer in a specific situation in  \S\ref{irredentryloci}. This is a very classical issue and work of Lopez and Ran \cite{lr} (in the case of the entry locus $\Gamma_q(X)$ when $\mathrm{rk}_X(q)=2$) proved irreducibility in some range; in {\it loc. cit.} this result was shown to imply a variant of Zak's linear normality theorem \cite[Theorem II.2.14]{z}. In these regards, we wonder what is the precise relation (if any) between
higher entry loci and a generalization of Zak's linear normality theorem proved by Chiantini and Ciliberto \cite[Theorem 7.5 and Corollary 7.6]{cc3}. 

For the sake of comparison, the very same irreducibility question may be asked for $\mathrm{VSP}$'s; as far as we know, several known results for these varieties (e.g. the rationality results of Massarenti and Mella \cite{mm, ma}) apply to each of the irreducible components, without knowing the irreducibility to begin with. 

Another geometric tool we hope to bring to problems in the setting of $X$-ranks is the interesting notion of {\it Segre points} of a projective variety, studied by Calabri and Ciliberto \cite{cac} and introduced by B. Segre. We specifically look at Segre points of entry loci and derive some geometric information: we apply them to the first intriguing case of entry loci, i.e. that of normal surfaces in $\PP^4$. For the latter surfaces, we focus on the first low degree cases. In the smooth case, for completeness, we include a proof of the irreducibility of entry loci (with essentially one exception), which follows from the classical work on the {\it double curve} (``la curva doppia'') by Franchetta dating back to 1941 \cite{fra}. \\

\noindent {\bf Contributions and structure of the paper.} We now describe more thoroughly the content of this paper. Notation and preliminary definitions used throughout the article are set up in \S\ref{prelim}. Here we also record some basic facts about entry loci. 

In \S\ref{segrepoints}, we recall the definition of Segre points for a projective variety and we prove Lemma \ref{y5} and Lemma \ref{y9} that are useful
to describe entry loci for some surfaces in $\PP^4$, which is achieved in \S\ref{surfaces}. In Theorem \ref{uu0}, we give a description of the closure of
the Segre locus of the entry locus of $n$-dimensional projective variety $X\subset \PP^{2n}$. 

The delicate question raised in \S\ref{irredentryloci} asks how to detect irreducibility of the general entry loci $\Gamma_q(X)$. The answer to such a classical 
question seems to be out of reach in general. In some range, this problem was solved by Lopez and Ran \cite{lr}.

Theorem \ref{j05} and Proposition \ref{05} provide such a case when $X$ is a birational  general linear projection. 
Next, we introduce {\it types} (Definition \ref{types}) according to whether the entry locus $\Gamma_q(X)$ is irreducible ({\it type} {\bf I}) or not  ({\it type} {\bf II}). 
Moreover, $X$ is said to be of {\it type} {\bf A} if for a general point in the span $\langle \Gamma_q(X)\rangle$ the entry locus does not change (Definition \ref{typesa&b}). This turns out to be a valuable property in terms of ranks with respect to $X$ and $\Gamma_q(X)$, as highlighted by Lemma \ref{typeAgen} and Theorem \ref{x5}. The latter relates {\it type} {\bf IA} with $QEL$-manifolds of Ionescu and Russo \cite{ir,r2}. Remark \ref{x6} shows that any smooth complete intersection $X\subset \PP^{2n}$ of $n$ hypersurfaces of degrees $d_1,\ldots, d_{n}\geq 2$ such that $d_1+\cdots +d_{n}\ge 2n+1$ is of type {\bf B}. 

In \S\ref{surfaces}, we look at normal surfaces of low degree $X\subset \PP^4$. When $X$ is smooth, we record a proof of the irreducibility of the entry loci for all smooth surfaces $X\subset \PP^4$, unless $X$ is an isomorphic projection of the Veronese surface in $\PP^5$, which 
is derived from a classical result due to Franchetta \cite{fra}. Although we believe this is known to experts, we could not find a reference
where the proof is written explicitly; so we include a proof for the sake of completeness. 

For such surfaces $X$, the general entry locus is a curve. In Proposition \ref{degentryloc}, we give the degree of $\Gamma_q(X)$. Corollary \ref{uuuu1} describes
the situation for $X$ being a normal cone. Proposition \ref{mindegsurf} provides a description of $\Gamma_q(X)$ for normal surfaces of minimal degree three. 
In Theorem \ref{surfdegfour}, we analyze entry loci of degree four surfaces with sectional genus one.  

\S\ref{afamily} is devoted to the class $\mathcal F(n,s)$ of projective varieties whose entry loci are irreducible curves (Definition \ref{largeclass}). We prove that
any smooth and irreducible projective variety $W$ can be embedded in such a way $W$ is isomorphic to an element of $\mathcal F(n,s)$; in fact, Theorem \ref{u1} gives sufficient conditions on a line bundle in order to define such an embedding. As a consequence, perhaps surprisingly, this result suggests how large the collection $\mathcal F(n,s)$ is.

In \S\ref{tangdeg}, under the assumption $(\dagger)$ (i.e. in characteristic zero there are no {\it tangentially degenerate curves}), we prove that for a {\it type} {\bf A} non-defective $n$-dimensional variety $X\subset \PP^{s(n+1)-2}$, its entry locus $\Gamma_q(X)$ is the rational normal curve of degree $2s-2$ (Theorem \ref{v1}). In the special case when $s=3$, {\it without assuming} $(\dagger)$, Theorem \ref{d2} shows that $\Gamma_q(X)$ is rational. 
 \vspace{3mm}

\begin{small}
\noindent {\bf Acknowledgements.} The first author was partially supported by MIUR and GNSAGA of INdAM (Italy). The authors would like
to thank Francesco Russo for useful discussions. 
\end{small}

\section{Preliminaries}\label{prelim}
We work over an algebraically closed field of characteristic zero. 

Let  $X_1,\dots ,X_s\subset \PP^r$ be integral and non-degenerate varieties. Assume $1\le s\le r+1$, so that $\{p_1,\dots ,p_s\}$ is linearly independent for a general collection $(p_1,\dots , p_s)\in X_1\times \cdots \times X_s$. (Although, the assumption on $s$ is not necessary in what follows.) 

The {\it abstract join} $\Jj(X_1,\dots ,X_s)\subset X_1\times \cdots \times X_s\times \PP^r$ is the closure of the locally closed set of all tuples $(p_1,\dots ,p_s, q)$ with $p_i\in X_i$, such that $q\in \langle p_1,\dots ,p_s\rangle$. Let $\pi: \Jj(X_1,\dots ,X_s)\to \PP^r$ be the projection onto the last factor $\PP^r$. The projective variety $\JJ(X_1,\dots ,X_s):= \pi (\Jj(X_1,\dots ,X_s))\subseteq \PP^N$ is the {\it join} of $X_1,\dots ,X_s$. When $X = X_i$, the (abstract) join is called ({\it abstract}) $s$-th {\it secant} of $X$. Let $\sigma_s(X)$ denote the $s$-th secant of $X$; $r_{\gen}(X)$ is the least integer such that $\sigma_{r_{\gen}}(X) = \PP^r$. 

Joins and secants can be naturally defined for reducible varieties as well \cite[\S 1]{cc3}; in this case they might be reducible. 

 A finite set $S\subset \PP^r$  \emph{irredundantly spans} $q\in \PP^r$ if $q\in \langle S\rangle$ and $q\notin \langle S'\rangle$ for any $S'\subsetneq S$. The cardinality of a finite set $S$ is $\sharp(S)$. 

\begin{definition}

Let $X\subset \PP^r$ be a non-degenerate projective variety. The $X$-rank of a point $q\in \PP^r$ is 
\[
r_X(q) = \min\lbrace s \ | \ q\in \langle p_1,\ldots, p_s \rangle, p_i\in X \rbrace, 
\]
i.e. the finite set $\lbrace p_1,\ldots, p_{r_X(q)}\rbrace$ irredundantly spans $q$. A general $q\in \PP^r$ satisfies $r_X(q) = r_{\gen}(X)$. Whenever $X$ is degenerate in its ambient space, the definition applies to the points $q\in \langle X\rangle$. 
\end{definition}

\begin{definition}
Let $Y$ be an equidimensional projective variety of dimension $m$. The $s$-th secant of $Y$ has {\it the expected dimension} if $\dim \sigma_s(Y) = \min \{(m+1)s-1,\dim\langle Y\rangle\}$. When all secants of $Y$ have the expected dimension, $Y$ is said to be {\it non-defective}. 
\end{definition}

\begin{definition}\label{entrylocus}
For any $q\in \PP^r$, let $\Ss (X_1,\dots ,X_s,q)$ denote the set of all distinct $\{p_1,\dots ,p_s\}\subset \PP^r$ such that $p_i\in X_i$ and $\{p_1,\dots ,p_s\}$ irredundantly spans $q$. Denote the closure of  $\Ss (X_1,\dots ,X_s,q)$ in $\PP^r$ by $\Gamma _q(X_1,\dots ,X_s)$. This locus is called the {\it entry locus} of $X_1,\ldots, X_s$ at $q$. When $X = X_i$, the entry locus is denoted $\Gamma_q(X, s)$ and called the {\it $s$-th entry locus} of $X$. 

When $s=r_{\gen}(X)$, the entry locus is simply denoted $\Gamma_q(X)$. Moreover, define:
\[
\gamma(X) = \dim \Gamma _q(X) \mbox{ and } \ell(X) = \dim \langle \Gamma _q(X)\rangle.
\] 
\end{definition}

\begin{remark}
In general, the $s$-th {\it entry locus} of $q$ is not irreducible \cite[p. 96]{r3}, or \cite[Example 3.18]{cc3} which describes the example of a scroll.
\end{remark}

\begin{remark}
The $s$-th entry locus $\Gamma_q(X)$ is contained in the tangential $(s+1)$-contact locus of any $s+1$ points in $\Gamma_q(X)$;
see \cite[Definition 3.4]{cc3} and \cite[Remark 3.14]{cc3}. 
\end{remark}

The next lemma records the dimension of $\Gamma_q(X)$; see \cite[Definition 1.4.6]{r3} or \cite[Proposition 3.13]{cc3}. 

\begin{lemma}\label{dimentrylocus}
Let $X\subset \PP^r$ be integral and non-degenerate variety. Then: 
\[
\gamma(X) = \dim \Gamma _q(X) = \dim\sigma_{r_{\gen}(X)-1}(X) + \dim X + 1-r.
\]
\end{lemma}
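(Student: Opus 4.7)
The plan is to exploit a standard incidence correspondence and compute its dimension in two ways. Set $s = r_{\gen}(X)$, so $\sigma_s(X) = \PP^r$, and introduce
\[
I := \{(p,q) \in X \times \PP^r : q \in \langle p \rangle * \sigma_{s-1}(X)\},
\]
where $\langle p\rangle * \sigma_{s-1}(X)$ is the join. Equip $I$ with projections $\alpha : I \to X$ and $\beta : I \to \PP^r$. The formula will follow by equating the dimension of $I$ computed via $\alpha$ with that computed via $\beta$.

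First I would compute $\dim I$ through $\alpha$. For $p \in X$, the fiber $\alpha^{-1}(p)$ is (isomorphic to) the cone $p * \sigma_{s-1}(X)$. Its dimension is at most $\dim \sigma_{s-1}(X) + 1$, and I would argue that this bound is attained for generic $p$: since
\[
\sigma_s(X) = X * \sigma_{s-1}(X) = \PP^r \quad \text{while} \quad \sigma_{s-1}(X) \subsetneq \PP^r,
\]
the cones $p * \sigma_{s-1}(X)$ cannot all coincide with $\sigma_{s-1}(X)$; by irreducibility of $X$ the generic cone has strictly larger dimension, hence exactly $\dim \sigma_{s-1}(X)+1$. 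Therefore $\dim I = \dim X + \dim \sigma_{s-1}(X) + 1$.

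Next I would compute $\dim I$ through $\beta$. Surjectivity of $\beta$ is immediate from $\bigcup_{p\in X} (p * \sigma_{s-1}(X)) = \sigma_s(X) = \PP^r$. For $q$ a general point we have $r_X(q) = s$, so the fiber $\beta^{-1}(q)$ consists of those $p \in X$ that appear in some decomposition $q \in \langle p,p_2,\dots,p_s\rangle$ with $p_2,\dots,p_s \in X$; and since $r_X(q) = s$ every such decomposition is automatically irredundant. By Definition \ref{entrylocus} this is precisely $\Gamma_q(X)$. Hence $\dim I = r + \dim \Gamma_q(X)$, and equating the two expressions gives the result.

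The main potential obstacle is the lower bound for the generic fiber dimension of $\alpha$, i.e.\ ruling out the possibility that $\sigma_{s-1}(X)$ is a cone with vertex a generic point of $X$; this is exactly what is prevented by $\sigma_{s-1}(X) \neq \sigma_s(X)$. A secondary subtlety is the identification $\overline{\beta^{-1}(q)} = \Gamma_q(X)$, where one must verify that for $q$ generic the condition $q \in \langle p\rangle * \sigma_{s-1}(X)$ coincides with $p$ belonging to an irredundant $s$-point decomposition; both checks are routine from the definitions, but they are the only places where the argument is not purely mechanical.
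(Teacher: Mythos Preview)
The paper does not supply its own proof of this lemma; it simply records the formula and cites \cite[Definition~1.4.6]{r3} and \cite[Proposition~3.13]{cc3}. Your incidence-correspondence argument is precisely the standard proof found in those references and is correct as outlined; the only cosmetic improvement would be to work with the abstract join $\Jj(X,\sigma_{s-1}(X))\subset X\times\sigma_{s-1}(X)\times\PP^r$ (already introduced in \S\ref{prelim}) rather than your $I$, since the abstract join is irreducible of dimension $\dim X+\dim\sigma_{s-1}(X)+1$ by construction, which sidesteps the closedness and irreducibility issues your final paragraph correctly flags.
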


\begin{remark}\label{x1}
The entry loci $\Gamma _q(X)$ for $q$ general are equidimensional \cite[p.21]{r3}. 
\end{remark}

\section{Entry loci and Segre points}\label{segrepoints}

In this section, we discuss Segre points following \cite{cac} and the classical work of B. Segre. Results from here
are only used in \S\ref{surfaces}, and they might be of independent interest. 

\begin{definition}[{\bf \cite[\S 1]{cac}}]

Let $Y\subset \PP^r$ be an integral projective variety. A {\it Segre point} of $Y$ is a point $q\in \PP^r\setminus Y$ such that 
the projection away from $q$ induces a {\it non-birational} map on $Y$. The set of Segre points of $Y$ is denoted $\Sigma(Y)$. 
This is a constructible set and its closure $\mathfrak{S}(Y)$ is the {\it Segre locus} of $Y$. 
\end{definition}

This is generalized to pairs of the same dimension as follows: 

\begin{definition}
Let $Y, T\subset \PP^r$ be distinct integral projective varieties of the same dimension and such that $\langle Y\cup T\rangle
=\PP^r$. A {\it Segre point} $q$ of $Y\cup T$ is a point $q\in \PP^r\setminus (Y\cup T)$ such that $\pi _q(Y) =\pi _q(T)$,
i.e. a general line through $q$ meeting $Y$ meets $T$ as well.  Let $\Sigma (Y,T)$ denote the set of all Segre points of $Y\cup T$. 
Let $\mathfrak{S}(Y,T)$ denote the closure of $\Sigma (Y,T)$ in $\PP^r$.  We say that $\mathfrak{S}(Y,T)$ is the {\it Segre locus} of $Y\cup T$, or of the pair $(Y,T)$. 
\end{definition}

\begin{remark}\label{y3}
Let $Y, T\subset \PP^r$ be integral curves such that $Y\ne T$ and $\langle Y\cup T\rangle =\PP^r$.
\begin{enumerate}

\item[(i)] If $r=2$, then $\Sigma (Y,T) =\PP^2\setminus (Y\cup T)$.

\item[(ii)] Assume $Y\cap T=\emptyset$ and that $Y$ and $T$ are lines. Thus $r=3$. For each $o\in \PP^3\setminus Y\cup T$ there
is a unique line meeting both $Y$ and $T$. Hence $\Sigma (Y,T)=\emptyset$.

\item[(iii)] Assume that $Y$ is a line and that  $T$ is not a line. In this case, $\Sigma (Y,T)\ne \emptyset$ if and only if
$r=2$.

\item[(iv)] Assume $\dim \langle Y\rangle =\dim \langle T\rangle =2$ and $r>2$. Then it is clear that $3\le r\le 5$. 
\vspace{2mm}
\begin{quote}
\noindent {\it Claim}: If $\Sigma (Y,T) \ne \emptyset$ then $r=3$ and there is an isomorphism $u: \langle Y\rangle \to \langle T\rangle$ mapping $Y$ onto $T$ and fixing point-wise the line $\langle Y\rangle \cap \langle T\rangle$. \\

\noindent {\it Proof of the Claim}: Let $o\in \Sigma (Y,T)$ and call $W$ the cone with base $Y$ and $o$ as its vertex. Since $o\in \Sigma (Y,T)$, we have $T\subset W$. Hence $\langle W\rangle \supseteq \langle Y\cup T\rangle =\PP^r$. Thus $\langle W\rangle =\PP^r$. Since $r>2$, we have $W\nsubseteq \langle Y\rangle$. Therefore $o\notin \langle Y\rangle$. Since $\langle Y\rangle$ is a plane, we get $r=3$. Since $\Sigma (Y,T) =\Sigma (T,Y)$,  one has $o\notin \langle T\rangle$. Hence the linear projection from $o$ induces an isomorphism $ u: \langle Y\rangle \to \langle T\rangle$ mapping $Y$ onto $T$ and fixing point-wise the line $\langle Y\rangle \cap \langle T\rangle$.
\end{quote}
\vspace{2mm}
\item[(v)] Assume $\dim \langle Y\rangle >\dim \langle T\rangle =2$. If $o\in \langle T\rangle$ and $o\notin Y\cup T$, then
$o\notin \Sigma (Y,T)$. 

If $o\notin \langle Y\rangle$, then $o\notin \Sigma (Y,T)$. Indeed, if $o\in \Sigma(Y,T)$, the projection from $o$ would map $Y$ 
isomorphically to the curve $T$ whose span would have the same dimension, a contradiction. 

If $r>3$, then $\Sigma (Y,T)=\emptyset$. On the contrary, let $W$ be the cone with vertex $o\in \Sigma(Y,T)$ and $T$ as its base. Thus $\langle W\rangle$ has dimension three and contains $\langle T\rangle$; however, $W$ contains $Y$, by definition of the point $o$, and hence $\langle W\rangle$ contains $\langle Y\rangle$ as well. This implies $\langle W\rangle = \langle Y\rangle\supset \langle T\rangle$ and so $r=3$, a contradiction. 

Assume $r=3$ and take $o\in \PP^3\setminus (Y\cup \langle T\rangle)$. If $\deg (Y)<\deg (T)$, then $o\notin \Sigma
(Y,T)$, as otherwise the projection from $o$ would induce an isomorphism from $T$ onto $\pi_o(T)$ and $\deg(T) = \pi_o(T) = \pi_o(Y)\leq \deg(Y)$. On the other hand, if $\deg (Y)>\deg (T)$ and $o\in \Sigma (Y,T)$, then $o$ is a Segre point of $Y$ and hence there are at most finitely
many such points \cite[Theorem 1]{cac}. 

Assume $\deg (Y)=\deg (T)$. In this case, $\Sigma (Y,T)\ne \emptyset$ if and only if the
normalization $u: \tilde{T}\to T$ factors through morphisms $u': \tilde{T}\to Y$ and $v: Y\to T$ (i.e., $u=v\circ u'$ with $v$ induced by
a linear projection from a Segre point $o\in \Sigma (Y,T)$).

\end{enumerate}
\end{remark}

\begin{remark}\label{y4}
Let $r>3$. Let $Y\subset \PP^r$ and $T\subset \PP^r$ be integral curves such that $Y\ne T$ and $\langle Y\cup T\rangle =\PP^r$.
Let $V\subset \PP^r$ be a general $(r-4)$-dimensional linear space. Let $\pi_V : \PP^r\setminus V\to \PP^3$ denote the linear
projection from $V$. We have $Y\cap V=T\cap V = \emptyset$, $\dim \langle \pi_V (Y)\rangle =\min \{3,\dim \langle Y\rangle\}$
and $\dim \langle \pi_V (T)\rangle =\min \{3,\dim \langle T\rangle\}$. Since $V$ is general, it contains no irreducible
component $D$ of $\mathfrak{S}(Y,T)$ and $\dim \pi_V (D\setminus D\cap V) =\min \{3,\dim D\}$. We have $\pi_V (\Sigma
(Y,T)\setminus \Sigma (Y,T)\cap V)\subseteq \Sigma(\pi_V(Y),\pi_V(T))$.
\end{remark}

\begin{lemma}\label{y5}
Let $Y, T\subset \PP^r$, $r\ge 3$, be integral curve. Assume $Y\ne T$ and $\langle Y\cup T\rangle =\PP^r$.
Then $\dim \Sigma (Y,T)\le 1$. If $\dim \Sigma (Y,T)=1$, then $\deg (Y)=\deg (T)$ and for any
one-dimensional irreducible component $D$ of $\mathfrak{S}(Y,T)$ we have
$\deg (D)=\deg (Y)$. Moreover, the curves $Y$, $T$, and $D$ are birational.
\end{lemma}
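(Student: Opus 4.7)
My plan is to reduce to the case $r = 3$ via a generic linear projection, then exploit two different projections inside $\PP^3$: first from a general Segre point of a component $D$ of $\mathfrak{S}(Y,T)$ to deduce birationality and $\deg Y = \deg T$, and then from a general point of $Y$ to bound $\dim D$ and compute $\deg D$.

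For the reduction, if $r > 3$, I fix a general $(r-4)$-dimensional linear subspace $V \subset \PP^r$ and invoke Remark~\ref{y4}: the projection $\pi_V$ sends $\Sigma(Y,T) \setminus V$ into $\Sigma(\pi_V(Y), \pi_V(T))$, and by the genericity of $V$ it preserves both the dimensions of positive-dimensional components of $\mathfrak{S}(Y,T)$ and the degrees of $Y$, $T$, and of such components, since a general linear projection is birational on any curve it does not meet. So it suffices to prove the lemma in $\PP^3$. In that setting, let $D \subseteq \mathfrak{S}(Y,T)$ be an irreducible component with $\dim D \geq 1$. By \cite[Theorem 1]{cac} the Segre loci of the individual curves $Y$ and $T$ are finite, so for a general $o \in D$ one has $o \notin \mathfrak{S}(Y) \cup \mathfrak{S}(T) \cup Y \cup T$. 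Hence $\pi_o|_Y$ and $\pi_o|_T$ are birational onto their common image $C := \pi_o(Y) = \pi_o(T) \subset \PP^2$, which gives the birationality of $Y$, $T$, and $C$. The degree formula $\deg Y = \deg(\pi_o|_Y) \cdot \deg C = \deg C$, together with its symmetric analogue for $T$, then yields $\deg Y = \deg T$.

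To control $\dim D$ and compute $\deg D$, I now project from a general $p \in Y$. Since $\mathfrak{S}(T)$ is finite, $p \notin \mathfrak{S}(T)$ and $\pi_p|_T$ is birational with $\deg \pi_p(T) = \deg T$. For any $o \in \Sigma(Y,T)$, the relation $\pi_o(p) \in \pi_o(Y) = \pi_o(T)$ yields some $t \in T$ collinear with $o$ and $p$, whence $\pi_p(o) = \pi_p(t) \in \pi_p(T)$; passing to closures, $\pi_p(\mathfrak{S}(Y,T)) \subseteq \pi_p(T)$, an irreducible curve in $\PP^2$. Provided that $p \notin D$, the map $\pi_p|_D$ has finite fibers, so $\dim D \leq \dim \pi_p(D) \leq 1$; and when $\dim D = 1$, irreducibility forces $\pi_p(D) = \pi_p(T)$, and birationality of $\pi_p|_D$ for generic $p$ delivers $\deg D = \deg \pi_p(T) = \deg T = \deg Y$. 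The main obstacle is excluding the degenerate possibility $Y \subseteq D$, which can only arise if $\dim D \geq 2$: in that case $\pi_p(D) \subseteq \pi_p(T)$ must have dimension at most one for every $p \in Y$, forcing $D$ to be a cone with vertex at every point of $Y$; a cone-vertex analysis then constrains $D$ to be a plane containing $Y$, and a further use of $\pi_p(D) \subseteq \pi_p(T)$ for generic $p$ places $T$ in the same plane, contradicting $\langle Y \cup T\rangle = \PP^3$.
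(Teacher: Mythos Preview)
Your argument is correct and shares its first half with the paper: both reduce to $\PP^3$ via Remark~\ref{y4}, then project from a general $o\in D$ and invoke \cite[Theorem~1]{cac} to conclude $\deg Y=\deg T$ and that $Y,T$ are birational. The second half is genuinely different. The paper argues by symmetry: counting the $\infty^2$ secant lines of $Y\cup T$ against the $\infty^1$ such lines through each $o\in D$, it concludes $T\subseteq\mathfrak{S}(D,Y)$, and then re-applies the first step to the pair $(D,Y)$ with $T$ playing the role of the Segre component. Your approach instead projects from a general $p\in Y$ and uses the containment $\pi_p(\mathfrak{S}(Y,T))\subseteq\pi_p(T)$ directly; this has the advantage of giving the bound $\dim D\le 1$ more transparently (the paper's proof is less explicit on this point), at the cost of the extra cone analysis ruling out $Y\subseteq D$.

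Two points to tighten. First, you should invoke Remark~\ref{y3} (as the paper does) to reduce to the case $\langle Y\rangle=\langle T\rangle=\PP^3$ before applying \cite[Theorem~1]{cac}: that theorem only gives finiteness of $\mathfrak{S}(Y)$, $\mathfrak{S}(T)$ for non-degenerate curves, and without this reduction your sentence ``the Segre loci of the individual curves $Y$ and $T$ are finite'' is not justified. Second, the assertion ``birationality of $\pi_p|_D$ for generic $p$'' needs a word of justification: if $\langle D\rangle=\PP^3$ it follows from \cite[Theorem~1]{cac}, and if $\langle D\rangle\subsetneq\PP^3$ then generic $p\in Y$ lies outside $\langle D\rangle$ (using $\langle Y\rangle=\PP^3$) so $\pi_p|_{\langle D\rangle}$ is a linear isomorphism. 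Once $\pi_p|_D$ and $\pi_p|_T$ are both birational onto $\pi_p(T)$, you also get that $D$ is birational to $T$ (and hence to $Y$), completing the last clause of the lemma; you should state this explicitly.
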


\begin{proof}
Let $d = \deg (Y)$. Taking a general linear projection, we reduce to the case $r=3$, by Remark \ref{y4}. Take any irreducible
curve $D'\subseteq \Sigma (Y,T)$ and let $D\subseteq \mathfrak{S}(Y,T)$ its closure in $\PP^3$. By Remark \ref{y3},
we may assume that $\langle Y\rangle =\langle T\rangle =\PP^3$. For a general
$o\in D$, each line through $o$ meeting $Y$ meets $T$. Since $Y$ and $T$ have only finitely many Segre points \cite[Theorem 1]{cac}, we get that for a general $o\in Y$ the curve $\pi _o(Y)$ (resp. $\pi _o(T)$) has degree $d$ (resp. $\deg (T)$). Since $\pi_o(Y) =\pi_o(T)$, one has
$\deg (T) =d$, and that $Y$ and $D$ are birational. 

There are $\infty ^2$-many lines spanned by a point of $Y$ and a different point of $T$. Since
$\dim D=1$ and a general $o\in D$ is contained in $\infty ^1$-many lines meeting $Y$ and a different point of $T$, we obtain
$\mathfrak{S}(D,Y)\supseteq T$. Thus, by the same arguments as above, $\deg (D) =d$, and $D$ and $Y$ are birational.
\end{proof}

\begin{theorem}\label{uu0}
Let $n\geq 2$ and $X\subset \PP^{2n}$ be an $n$-dimensional integral and non-degenerate variety such that $\sigma _2(X)=\PP^{2n}$.
Let $q\in \PP^{2n}$ be general. Then:
\begin{enumerate}

\item[(i)] $\dim \mathfrak{S}(\Gamma_q(X))\le 2$.

\item[(ii)] Let $\Pi$ be any two-dimensional irreducible component of $\mathfrak{S}(\Gamma_q(X))$. Then $\Pi$
is a plane spanned by the union of the components of $\Gamma _q(X)$ contained in $\Pi$.
\end{enumerate}
\begin{proof}

(ii). Take an irreducible component $\Delta$ of $\mathfrak{S}(\Gamma_q(X))$
such that $\dim \Delta \geq 2$. By Lemma \ref{dimentrylocus}, $\dim
\Gamma _q(X)=1$. 

Now, either there is an irreducible component $D$ of $\Gamma_q(X)$ such that $\mathfrak{S}(D) \supseteq \Delta$ or there are irreducible components $Y$ and $T$ of the curve $\Gamma_q(X)$ such that $\mathfrak{S}(Y,T) \supseteq \Delta$.

Assume the existence of $D$ and let $m = \dim \langle D\rangle\geq 2$. If $m\ge 3$, then $\dim \mathfrak{S}(D) \le 0$
by \cite[Theorem 1]{cac}. Thus $m=2$. The plane $\langle D\rangle$ is by definition spanned by $D$ and coincides with $\mathfrak{S}(D)$. Therefore $\mathfrak{S}(D) = \Delta$. So $\Delta$ is a plane. 

Assume the existence of $Y$ and $T$. By Lemma \ref{y5}, we have $\dim \langle Y\cup T\rangle =2$. Thus
$\mathfrak{S}(Y,T) =\langle Y\cup T\rangle$, as in Remark \ref{y3}(i). As before, $\mathfrak{S}(Y,T) = \Delta$ and hence $\Delta$ is a plane. Moreover, the analysis above implies (i). \end{proof}
\end{theorem}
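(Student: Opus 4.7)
The plan is to bound the irreducible components of $\mathfrak{S}(\Gamma_q(X))$ by decomposing according to the two ways a projection can fail to be birational on a reducible curve. The hypothesis $\sigma_2(X)=\PP^{2n}$ gives $r_{\gen}(X)=2$ and $\dim\sigma_1(X)=n$, so Lemma \ref{dimentrylocus} yields $\gamma(X)=n+n+1-2n=1$. By Remark \ref{x1}, $\Gamma_q(X)$ is equidimensional of dimension one, hence a finite union of integral curves in $\PP^{2n}$.

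If $o\notin \Gamma_q(X)$ is a Segre point of $\Gamma_q(X)$, then $\pi_o$ either fails to be birational on some irreducible component $D$ (so $o\in \Sigma(D)$), or identifies two distinct components $Y,T$ (so $o\in \Sigma(Y,T)$). Passing to closures, any irreducible component $\Delta$ of $\mathfrak{S}(\Gamma_q(X))$ with $\dim\Delta\ge 2$ is contained either in $\mathfrak{S}(D)$ for a single component $D$, or in $\mathfrak{S}(Y,T)$ for two distinct components $Y\ne T$. I would then analyze each case separately.

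In the single-component case, set $m=\dim\langle D\rangle$. If $m\ge 3$, Theorem~1 of \cite{cac} forces $\dim\mathfrak{S}(D)\le 0$, contradicting $\dim\Delta\ge 2$. Hence $m=2$ and $D$ is a plane curve in $\Pi:=\langle D\rangle$. A short verification (points of $\Pi\setminus D$ project $D$ to a line of degree $\ge 2$, while points outside $\Pi$ project $D$ birationally to a plane curve of the same degree) then gives $\mathfrak{S}(D)=\Pi$, so $\Delta=\Pi$ is a plane spanned by $D$. In the pair case, Lemma \ref{y5} applied inside $\langle Y\cup T\rangle$ shows $\dim\Sigma(Y,T)\le 1$ whenever $\dim\langle Y\cup T\rangle\ge 3$; this is incompatible with $\dim\Delta\ge 2$, so $\dim\langle Y\cup T\rangle=2$, and Remark \ref{y3}(i) yields $\mathfrak{S}(Y,T)=\langle Y\cup T\rangle$, a plane spanned by $Y$ and $T$.

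The two cases together establish both conclusions: (i) holds because no irreducible component of $\mathfrak{S}(\Gamma_q(X))$ can exceed dimension two; and (ii) holds because a two-dimensional component $\Pi$ is a plane containing at least one component $D$ (case A) or two components $Y,T$ (case B) of $\Gamma_q(X)$ whose span equals $\Pi$, while further components of $\Gamma_q(X)$ contained in $\Pi$ can only refine but not enlarge the span. The step I expect to require the most care is the bookkeeping for the reducible Segre locus: one must confirm that every Segre point of $\Gamma_q(X)$ is indeed of one of the two types above, so that the initial decomposition is exhaustive; once that is in place, the argument reduces to citing \cite[Theorem 1]{cac} and Lemma \ref{y5}.
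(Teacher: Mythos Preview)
Your proposal is correct and follows essentially the same approach as the paper's proof: both reduce to the dichotomy $\Delta\subseteq\mathfrak{S}(D)$ versus $\Delta\subseteq\mathfrak{S}(Y,T)$, then invoke \cite[Theorem~1]{cac} in the first case and Lemma~\ref{y5} with Remark~\ref{y3}(i) in the second. You spell out a few steps the paper leaves implicit (the verification that $\mathfrak{S}(D)=\langle D\rangle$ for a plane curve, and the bookkeeping that the dichotomy is exhaustive), but the structure and the cited inputs are identical.
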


\begin{lemma}\label{y9}
Let $Y\subset \PP^r$ and $T\subset \PP^r$ be integral projective varieties such that $\langle Y\cup T\rangle = \PP^r$ and $M = \langle Y\rangle$ has dimension $m\le r-2$. Then there is no $o\in \PP^r\setminus (M\cup T)$ such that the linear projection $\pi _o: \PP^r\setminus \{o\}\to \PP^{r-1}$ satisfies $\pi _o(T) \subseteq \pi _o(Y)$.
\end{lemma}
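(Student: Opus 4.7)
The plan is to argue by contradiction. Suppose there exists a point $o\in \PP^r\setminus (M\cup T)$ with $\pi_o(T)\subseteq \pi_o(Y)$. Since $o\notin M$ and $M$ is a linear subspace of dimension $m$, the linear span $N=\langle \{o\}\cup M\rangle$ is an $(m+1)$-dimensional linear subspace of $\PP^r$. The hypothesis $m\le r-2$ gives $\dim N = m+1\le r-1$, in particular $N\subsetneq \PP^r$. The goal is to show that $T\subseteq N$, which forces $\langle Y\cup T\rangle\subseteq N\subsetneq \PP^r$ and contradicts the hypothesis $\langle Y\cup T\rangle=\PP^r$.

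The key geometric fact is that the preimage of $\pi_o(M)$ under $\pi_o$ is exactly $N$. Indeed, $\pi_o(M)$ is a linear subspace of $\PP^{r-1}$ of dimension $m$ (since $o\notin M$), and by general principles of linear projection, $\pi_o^{-1}(\pi_o(M))$ is the cone over $\pi_o(M)$ with vertex $o$, which is precisely $\langle\{o\}\cup M\rangle = N$. Now since $Y\subseteq M$, we have $\pi_o(Y)\subseteq \pi_o(M)$, and the assumption $\pi_o(T)\subseteq \pi_o(Y)$ gives the chain
\[
\pi_o(T)\subseteq \pi_o(Y)\subseteq \pi_o(M).
\]
Hence $T\subseteq \pi_o^{-1}(\pi_o(M))=N$, which completes the argument as described above.

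There is no serious obstacle here: the proof is a direct linear-algebraic/projective computation once one identifies $\pi_o^{-1}(\pi_o(M))$ with the join $N=\langle\{o\}\cup M\rangle$. The only small subtlety is verifying that $o\notin M$ (guaranteed by the hypothesis $o\notin M\cup T$) so that $\dim N=m+1$ and the cone description of the preimage holds unambiguously. The bound $m\le r-2$ is used exactly once, to ensure $N$ is a proper subspace of $\PP^r$.
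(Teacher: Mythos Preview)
Your proof is correct and follows essentially the same approach as the paper: both argue by contradiction and show that $T$ (and hence $Y\cup T$) is forced into the linear span $\langle\{o\}\cup M\rangle$ of dimension $m+1<r$. The paper phrases this via the cone $W$ with vertex $o$ and base $Y$, noting $T\subseteq W$ and $\langle W\rangle\subseteq\langle\{o\}\cup M\rangle$, which is the same computation as your identification $\pi_o^{-1}(\pi_o(M))=N$.
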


\begin{proof}
Assume that such an $o$ exists, i.e. assume that the cone $W$ with vertex $o$ and base $Y$ contains $T$. Since $W$ contains $Y\cup T$, we
have $\langle W\rangle =\PP^r$. However, $\langle W\rangle \subseteq \langle \{o\}\cup M\rangle$, and so $\dim \langle W\rangle  \le m+1<r$, a contradiction.
\end{proof}

We include the next result, which is not used anywhere else in the rest; however it may be of interest on its own right:

\begin{lemma}\label{y10}
Let $Y\subset \PP^r$ and $T\subset \PP^r$ be integral projective varieties such that $\langle Y\cup T\rangle = \PP^r$ and $\dim Y=\dim T$. Let $M = \langle Y\rangle$, $H = \langle T\rangle$, $m = \dim M$
and $h = \dim H$. Assume $m<r$, $h=r$ and $\deg (T) < 2\deg (Y)$. 
Then there is no $o\in \PP^r\setminus (M\cup T)$ such that the linear projection $\pi _o: \PP^r\setminus \{o\}\to \PP^{r-1}$ satisfies $\pi _o(T)\subseteq  \pi _o(Y)$.
\end{lemma}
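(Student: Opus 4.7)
The plan is to suppose for contradiction that such an $o$ exists and show first that $T \subseteq W$, where $W = \bigcup_{y \in Y}\langle o,y\rangle$ is the cone over $Y$ with vertex $o$: any $p \in T$ has $\pi_o(p) \in \pi_o(Y)$, so $\langle o,p\rangle$ meets $M$ at a point of $Y$, whence $p \in W$. Since $o \notin M$ we have $o \notin Y$, so $\dim W = \dim Y + 1$ and $\deg W = \deg Y$, making $T$ a codimension-one subvariety of $W$ that avoids the vertex.

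Next I would study $\pi_o|_T \colon T \to \pi_o(Y) \cong Y$. Being a morphism (as $o \notin T$), it must be surjective: otherwise its fibres, which lie inside lines through $o$ and hence have dimension $\le 1$, would be positive-dimensional, forcing $T$ to be a cone with vertex $o$ and contradicting $o \notin T$. Let $d \ge 1$ be its degree. To compute $\deg T$, take a general codimension-$\dim Y$ subspace $L = \langle o, L'\rangle$ through $o$, with $L' \subset M$ of complementary dimension; then $L' \cap Y$ consists of $\deg Y$ points, each generator $\langle o,y\rangle$ for $y \in L' \cap Y$ meets $T$ in $d$ transverse points, and so $\deg T = d \cdot \deg Y$. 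The hypothesis $\deg T < 2\deg Y$ then forces $d=1$, i.e., $\pi_o|_T$ is birational.

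The hard step is ruling out $d=1$. I would work on the resolution $\widetilde W = \PP(\Oo_Y \oplus \Oo_Y(-1)) \to Y$ of the vertex of $W$: the proper transform $\sigma_T$ of $T$ is a section of the $\PP^1$-bundle disjoint from the exceptional section $\sigma_0$ (since $o \notin T$). Classifying such sections via sub-line-bundles of $\Oo_Y \oplus \Oo_Y(-1)$, $\sigma_T$ must correspond to $\Oo_Y(-1) \hookrightarrow \Oo_Y \oplus \Oo_Y(-1)$ given by a pair $(f,1)$ with $f \in H^0(\Oo_Y(1))$. Descending to ambient coordinates, this yields $T = \{[f(y) : y_1 : \cdots : y_r] : y \in Y\}$, where $f$ is the restriction of a linear form $\sum c_i x_i$ on $M$; hence $T$ lies in the hyperplane $\{x_0 = \sum c_i x_i\} \subset \PP^r$, contradicting $h = r$. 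The principal obstacle is this last identification: carefully tracking the line-bundle data on $\widetilde W$ and its compatibility with the embedding of $W$ into $\PP^r$, so that any candidate section disjoint from $\sigma_0$ is genuinely forced to come from a linear form on $M$.
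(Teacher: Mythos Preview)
Your argument tracks the paper's closely through the reduction to $d=1$: both form the cone $W$ over $Y$ with vertex $o$, observe that $T\subset W$ is a codimension-one subvariety avoiding the vertex, and use $\deg T<2\deg Y$ to force $\pi_o|_T:T\to \pi_o(Y)\cong Y$ to be finite and birational. The paper then finishes in one line, asserting that an irreducible cone $W$ admits no non-degenerate codimension-one subvariety of degree $\deg W$ other than hyperplane sections; you instead try to justify essentially the same fact by passing to $\widetilde W=\PP(\Oo_Y\oplus\Oo_Y(-1))$ and classifying sections.

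The gap lies in your claim that the proper transform $\sigma_T$ is a \emph{section} of $\widetilde W\to Y$. From $d=1$ you only know that $T\to Y$ is finite and birational, not that it is an isomorphism; when $Y$ is not normal this fails, $\sigma_T$ meets some fibre in more than one point, and your sub-line-bundle classification does not apply. In fact the lemma as stated is false without a normality hypothesis on $Y$. Take $r=3$, $M=\{w=0\}$, $Y\subset M$ the nodal cubic $zy^2=x^3+x^2z$, $o=[1:0:0:0]$, and $T$ the twisted cubic $t\mapsto[t:t^2-1:t^3-t:1]$. One checks directly that $T\subset W$, $o\notin M\cup T$, $\langle T\rangle=\PP^3$, $\deg T=3<2\deg Y$, and $\pi_o(T)=Y=\pi_o(Y)$; here $T\to Y$ is the normalization map, two-to-one over the node, so $\sigma_T$ is not a section and $T$ is not a hyperplane section of $W$. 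The paper's one-line assertion fails on the very same example, so the obstruction is intrinsic to the statement rather than to your method; an additional hypothesis such as normality of $Y$ is needed for either argument to go through.
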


\begin{proof}
 By Lemma \ref{y9}, we may assume $m=r-1$. Assume that $o$ exists, i.e. assume that the cone $W$ with vertex $o$ and base $Y$ contains $T$. Since $o\notin M$, each line of $W$ containing $o$ intersects transversally $Y$ at a unique point. Hence $\pi_o$ induces a birational map on $Y$ onto its image; one has
 $\deg(\pi_o(Y)) = \deg(Y)$. 
 Since $o\notin T$, $\pi_{o|T}: T\to \PP^{r-1}$ is a finite morphism. Since $\dim Y = \dim T$, one has $\dim \pi _o(T) =\dim Y$ and hence $\pi _o(T) =\pi _o(Y)$. Then the map $\pi_{o|T}$ can be regarded as a finite morphism $\psi: T\rightarrow \pi_o(Y)$. Thus 
 \[
 \deg(T) = \deg(\psi)\deg(\psi(T)) = \deg(\psi)\deg(\pi_o(Y)) = \deg(\psi)\deg(Y).
 \]
By assumption $\deg (T) <  2\deg (Y)$, and hence $\psi$ is a birational morphism. In conclusion, $\deg (T) =\deg (Y) = \deg (W)$. However, the irreducible cone $W$ has no non-degenerate codimension one subvariety of degree $\deg (W)$ (the only ones with this degree being the hyperplane sections), a contradiction.
\end{proof}

\section{Irreducibility of some entry loci}\label{irredentryloci}
One of the first questions one might ask on entry loci is as follows: 

\begin{question}\label{q1}
Let $X\subset \PP^r$. When is $\Gamma _q(X)$ irreducible?
\end{question}

\begin{theorem}\label{j05}
Fix integers $s\ge 2$, $r, m\ge 1$, and integral non-degenerate varieties $X'_i\subset \PP^{r+m}$ such that $\JJ(X'_1,\dots ,X'_s) =\PP^{r+m}$ and $\dim X'_i \le r$ for all $i$. Let $E\subset \PP^{r+m}$
be a general $(m-1)$-dimensional linear subspace. Let $\pi _E:\PP^{r+m}\setminus E\to \PP^r$ denote the linear projection from $E$. Set $X_i:= \pi _E(X'_i)$. Then $\Gamma _q(X_1,\dots ,X_s)$ is irreducible for a general $q\in \PP^r$.
\end{theorem}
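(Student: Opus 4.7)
The plan is to lift the question to the abstract join $\Jj=\Jj(X'_1,\dots,X'_s)\subset X'_1\times\cdots\times X'_s\times\PP^{r+m}$, which is irreducible as the closure of an irreducible locally closed set. Let $\pi:\Jj\to\PP^{r+m}$ denote the projection to the last factor; it is surjective by the hypothesis $\JJ(X'_1,\dots,X'_s)=\PP^{r+m}$. Let $\rho_i:\Jj\to X'_i$ denote the $i$-th factor projection, and set $\Psi:=\pi_E\circ\pi$, viewed as a dominant rational map $\Jj\dashrightarrow\PP^r$. For a general $q\in\PP^r$ the closure of the fiber $\Psi^{-1}(q)$ coincides with $\pi^{-1}(L_q)$, where $L_q:=\overline{\pi_E^{-1}(q)}\subset\PP^{r+m}$ is the unique $m$-dimensional linear subspace containing $E$ and any lift of $q$. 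Because $E$ is chosen general and $q$ is generic, $L_q$ plays the role of a generic $m$-plane of $\PP^{r+m}$.

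The heart of the argument is the Bertini-type statement that $\pi^{-1}(L_q)$ is irreducible for general $L_q$. Since $\Jj$ is irreducible and $\pi$ is surjective, the pullback linear system $\pi^*|\Oo_{\PP^{r+m}}(1)|$ on $\Jj$ is base-point-free, so by the Bertini irreducibility theorem (in characteristic zero) $\pi^{-1}(H)$ is irreducible for a general hyperplane $H\subset\PP^{r+m}$. Iterating this step $r$ times, cutting by $r$ general hyperplanes whose intersection is $L_q$, yields irreducibility of $\pi^{-1}(L_q)$ provided the intermediate preimages retain dimension at least two at each step; the assumption $m\ge 1$, together with the fact that the hypothesis $\JJ(X'_1,\dots,X'_s)=\PP^{r+m}$ forces $\dim\Jj\ge r+m$, guarantees $\dim\pi^{-1}(L_q)\ge 1$ and allows the iteration to proceed through the last cut.

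With irreducibility of $\Psi^{-1}(q)=\pi^{-1}(L_q)$ in hand, the entry locus becomes an irreducible continuous image: $\Gamma_q(X_1,\dots,X_s)$ is realized as the closure in $\PP^r$ of $\bigcup_{i=1}^s\pi_E(\rho_i(\Psi^{-1}(q)))$. Each piece $\pi_E(\rho_i(\Psi^{-1}(q)))$ is irreducible as the continuous image of the irreducible variety $\pi^{-1}(L_q)$; in the symmetric setup $X'_1=\cdots=X'_s$ these pieces coincide via the $S_s$-action on $\Jj$ permuting the factors, producing a single irreducible $\Gamma_q$. The main obstacle is precisely the iterated Bertini step above: the $m$-planes $L_q$ are constrained to contain the general $(m-1)$-plane $E$ rather than being fully arbitrary in $G(m,r+m)$, so one must verify that Bertini applies to this restricted, but still sufficiently general, subfamily, which is ensured by the genericity of $E$.
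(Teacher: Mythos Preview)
Your argument follows the paper's line for line: lift to the irreducible abstract join $\Jj$, observe that for general $E$ and general $q$ the $m$-plane $M=L_q$ is a \emph{general} $m$-plane of $\PP^{r+m}$ (so your stated ``main obstacle'' dissolves---this is exactly how the paper handles it), apply Bertini $r$ times to conclude that $\pi^{-1}(M)$ is irreducible, and then pass to $\Gamma_q$. The only divergence is in the final step. The paper simply asserts ``a surjection $\mathrm{proj}^{-1}(M)\to\Gamma_q(X_1,\dots,X_s)$'' and concludes; you correctly unpack this as $s$ maps $\pi_E\circ\rho_i$ whose images have union dense in $\Gamma_q$, and note that a union of $s$ irreducible sets need not be irreducible. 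You then close the argument only in the symmetric case $X'_1=\cdots=X'_s$, via the $S_s$-action on $\Jj$.

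Your caution is in fact warranted: the statement is false for distinct $X'_i$. Take $s=2$, $r=2$, $m=1$, and $X'_1,X'_2\subset\PP^3$ two skew lines, so $\JJ(X'_1,X'_2)=\PP^3$ and all hypotheses hold. Projection from a general point $E$ yields two distinct lines $X_1,X_2\subset\PP^2$; for a general $q\in\PP^2$ every line through $q$ (except the one through $X_1\cap X_2$) contributes a pair $\{p_1,p_2\}\in\Ss(X_1,X_2,q)$ with $p_i\in X_i$, and as the line varies the $p_i$ sweep out $X_i\setminus(X_1\cap X_2)$, whence $\Gamma_q(X_1,X_2)=X_1\cup X_2$ is reducible. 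Thus you have established exactly the correct portion of the claim---the symmetric case, which is Proposition~\ref{05} and the only use made of this theorem in the paper.
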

\begin{proof}
Since $\JJ(X'_1,\dots ,X'_s) =\PP^{r+m}$, we have $\JJ(X_1,\dots ,X_s) =\PP^r$. Thus, one has $\Ss (X_1,\dots ,X_s,q)\ne \emptyset$ for a general $q\in \PP^r$. Fix a general $q\in \PP^r$.
Let $M\subset \PP^{r+m}$ be the $m$-dimensional linear space containing $E$ and with $\pi _E(M\setminus E) = \{q\}$. Since we first fixed $X'_1,\dots ,X'_s$ and then chose a general subspace $E$ and a general point $q\in \PP^r$, $M$ is a general $m$-dimensional linear subspace of $\PP^{r+m}$. Since $m\ge 1$, $\JJ(X'_1,\dots ,X'_s)=\PP^{r+m}$, $\Jj(X'_1,\dots ,X'_s)$ is irreducible and $M$ is general, applying $r$ times the second Bertini theorem (\cite[part 4) of Th. 6.3]{j} or \cite[Ex. III.11.3]{h}) gives that $\mathrm{proj}^{-1}(M)$ is irreducible, where $\mathrm{proj}$ is the projection onto $\PP^{r+m}$ of the abstract join $\mathcal J(X'_1,\ldots, X'_s)$. The set $\mathrm{proj}^{-1}(M)$ is the closure of the union of all sets $(p_1,\dots ,p_s,o)$ with $o\in M$, $p_i\in X'_i$ for all $i$, $\sharp (\{p_1,\dots ,p_s\})=s$ and $\{p_1,\dots ,p_s\}$ irredundantly spanning $o$. Since $\pi _E(M\setminus E) =\{q\}$, we have a surjection $\mathrm{proj}^{-1}(M)\to \Gamma _q(X_1,\dots ,X_s)$. Thus $\Gamma _q(X_1,\dots ,X_s)$ is irreducible.
\end{proof}

\begin{proposition}\label{05}
Fix an integer $s\ge 2$, $m>0$, and an integral non-degenerate variety $X'\subset \PP^{r+m}$ such that $\sigma _s(X') =\PP^{r+m}$ and $X'$ has codimension at least $m$. Let $X\subset \PP^r$ be a general projection of $X'$ from a general $(m-1)$-dimensional linear space. Then $\Gamma _q(X)$ is irreducible for a general $q\in \PP^r$.
\end{proposition}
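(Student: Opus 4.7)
The plan is to derive Proposition~\ref{05} as an immediate specialization of Theorem~\ref{j05}: set $X'_1 = X'_2 = \cdots = X'_s = X'$ and use the same general projection $\pi_E$. With this choice, $\JJ(X'_1,\dots,X'_s)$ collapses to the $s$-th secant variety $\sigma_s(X')$, each $X_i = \pi_E(X'_i)$ becomes $X = \pi_E(X')$, and the entry locus $\Gamma_q(X_1,\dots,X_s)$ becomes the $s$-th entry locus $\Gamma_q(X,s)$ of Definition~\ref{entrylocus}. I verify that all hypotheses of Theorem~\ref{j05} hold under this specialization: $X'$ is integral and non-degenerate in $\PP^{r+m}$ by assumption; $\JJ(X',\dots,X') = \sigma_s(X') = \PP^{r+m}$ is exactly the assumption on $\sigma_s(X')$; $\dim X' \le r$ follows from the codimension hypothesis $\mathrm{codim}_{\PP^{r+m}}(X') \ge m$; and $s \ge 2$, $m \ge 1$ are given. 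Theorem~\ref{j05} then delivers that $\Gamma_q(X,s)$ is irreducible for a general $q \in \PP^r$.

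The remaining step is to identify $\Gamma_q(X,s)$ with $\Gamma_q(X)$ in the convention of Definition~\ref{entrylocus}, i.e.\ to show that the generic $X$-rank equals $s$. The inequality $r_{\gen}(X) \le s$ is immediate because $\pi_E(\sigma_s(X') \setminus E) \supseteq \PP^r$, whence $\sigma_s(X) = \PP^r$. For the reverse inequality, a general $q \in \PP^r$ would have $X$-rank at most $s-1$ only if the $m$-plane $M \supset E$ lying over $q$ (characterized by $\pi_E(M \setminus E) = \{q\}$) met $\sigma_{s-1}(X')$; generality of $E$ and of $q$ would then force $\dim \sigma_{s-1}(X') \ge r$, which is prevented by reading the hypothesis as $s$ being the least integer with $\sigma_s(X') = \PP^{r+m}$ (the natural and implicit interpretation), or, alternatively, by a direct dimension count in the non-defective case. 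The main obstacle in executing this plan is precisely this last identification $s = r_{\gen}(X)$; the irreducibility statement itself reduces in a single step to Theorem~\ref{j05} once one observes that the theorem allows the varieties $X'_i$ to coincide.
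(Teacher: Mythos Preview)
Your approach is exactly the paper's: the proof there is the single sentence ``This is the case when $X'_i = X'$ for all $i$ in Theorem~\ref{j05}.'' Your verification of the hypotheses is correct and more explicit than the paper's.

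The extra issue you raise---whether $s = r_{\gen}(X)$ so that $\Gamma_q(X,s)$ coincides with $\Gamma_q(X)$ in the sense of Definition~\ref{entrylocus}---is not addressed in the paper's proof at all; the paper simply writes $\Gamma_q(X)$ and invokes Theorem~\ref{j05}. Your attempted justification has a gap: even if $s$ is minimal with $\sigma_s(X') = \PP^{r+m}$, one only gets $\dim \sigma_{s-1}(X') \le r+m-1$, not $\dim \sigma_{s-1}(X') < r$, so the general $m$-plane $M$ over $q$ may well meet $\sigma_{s-1}(X')$ and the reverse inequality $r_{\gen}(X) \ge s$ need not hold. The paper appears either to use $\Gamma_q(X)$ here as shorthand for $\Gamma_q(X,s)$, or to leave this point implicit; in any case, your reduction to Theorem~\ref{j05} is precisely what the paper does, and the notational concern you flag is a loose end in the paper itself rather than a defect in your argument.
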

\begin{proof}
This is the case when $X'_i =X'$ for all $i$ in Theorem \ref{j05}. 
\end{proof}

\begin{remark}
One would like to extend this to the case when $X$ is not a birational general linear projection. However, there are technical difficulties and counterexamples if we do not assume that $\sigma _s(X') =\PP^{r+m}$. The irreducibility of the entry locus for $s=2$ and $X$ smooth is the main result of Lopez and Ran \cite[Theorem 1]{lr}. 
\end{remark}

\begin{definition}[{\bf Types}]\label{types}
Let $s\in \NN$ and let $X$ be a projective variety. 
The variety $X$ is said to be of {\it type} {\bf I} at $s$ if $\Gamma _q(X,s)$ is irreducible for a general $q\in \PP^r$. Otherwise, $X$ 
is of {\it type} {\bf II} at $s$. When $s =r_{\gen}(X)$ we simply say that $X$ is of {\it type} {\bf I} or {\it type} {\bf II}, respectively. 
\end{definition}

\begin{definition}\label{typesa&b}
Let $X\subset \PP^r$ be integral and non-degenerate variety. The variety $X$ is said to be of {\it type} {\bf A}
if $\Gamma _o(X) =\Gamma _q(X)$ for a general $o\in \langle \Gamma _q(X)\rangle$. Otherwise, $X$ is said to be of {\it type} {\bf B}.
\end{definition}

\begin{remark}\label{existenceofopen}
There is a flattening stratification of the algebraic family of reduced projective schemes $\Gamma _q(X)$, $q\in \PP^r\setminus \sigma _{s-1}(X)$ (\cite[Lecture 8]{mum} or \cite[Theorem 5.13]{nit}), i.e. there is a partition of $\PP^r\setminus \sigma _{s-1}(X)$ into finitely many locally closed subsets $E_1,\dots ,E_t$, such that, for each $i\in \lbrace1,\dots , t\rbrace$, the algebraic family $\Gamma _q(X)$ over $E_i$ is flat. Hence, for all $q\in E_i$, the schemes $\Gamma _q(X)$ have the same Hilbert polynomial \cite[III.9.9]{h}. Call $U$ the unique $E_i$ containing a non-empty open subset of the irreducible variety $\PP^r\setminus \sigma _{s-1}(X)$. Taking a non-empty open subset $V$ of $U$ we may also assume that, for all $q\in V$, the schemes $\Gamma_q(X)$ have the same number of irreducible components and the same Hilbert function. Note that, in particular, $\Gamma_q(X)$ (for $q\in V$) share the spanning dimension, $\ell(X):= \dim \langle \Gamma _q(X)\rangle$. 

Fix $q\in V$ and let $\Uu_q = \lbrace o\in V \ | \ \Gamma_o(X) = \Gamma_q(X)\rbrace$. Since $\Uu _q\subset \langle \Gamma _q(X)\rangle$, the semicontinuity theorem for the local dimensions of fibers of morphisms \cite[Ex. II.3.22(b)]{h} shows that $X$ is of {\it type} {\bf A} if and only if there
exists a non-empty open subset $\Uu$ of $V$ such that $\dim \Uu _q = \ell(X)$ for all $q\in \Uu$. Thus there is a non-empty open subset $\Uu$ such that for all $q\in \Uu$ either the schemes $\Gamma _q(X)$ are all of {\it type} {\bf A} or of {\it type} {\bf B}.  
\end{remark}

\begin{lemma}\label{typeAgen}
Let $X\subset \PP^r$ be an integral and non-degenerate variety. Let $\Gamma _q(X)\subseteq X$ be the entry locus of $X$ where $q\in \PP^r$ is general.
If $X$ has type {\bf A}, then $r_X(o)=r_{\Gamma _q(X)}(o)$ for a general $o\in \langle \Gamma _q(X)\rangle$ and $\Gamma_o(\Gamma_q(X))= \Gamma _q(X)$ for a general $o\in \langle \Gamma _q(X)\rangle$. 
\end{lemma}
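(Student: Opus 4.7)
Setting $s = r_{\gen}(X)$, $\Gamma = \Gamma_q(X)$, and $\Lambda = \langle \Gamma \rangle$, the plan is first to show that a general $o \in \Lambda$ still has $X$-rank equal to $s$, and then to use the type \textbf{A} hypothesis to force every $X$-decomposition of such an $o$ to live in $\Gamma$. The first point is quick: since $q \in \Lambda$ and $r_X(q) = s$, one has $q \notin \sigma_{s-1}(X)$, so $\Lambda \not\subseteq \sigma_{s-1}(X)$ and $\Lambda \cap \sigma_{s-1}(X)$ is a proper closed subset of $\Lambda$. Hence on a non-empty open $U_1 \subseteq \Lambda$ we have $r_X(o) = s$.

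For the first equality in the lemma, I would invoke the type \textbf{A} hypothesis together with Remark \ref{existenceofopen} to produce a non-empty open $U \subseteq U_1$ on which $\Gamma_o(X) = \Gamma$. For any $o \in U$ and any $s$-point irredundantly spanning subset $\{p_1,\ldots,p_s\} \subset X$ of $o$, the points $p_i$ lie in $\Gamma_o(X) = \Gamma$ by the very definition of the entry locus; this subset is therefore also an irredundantly spanning subset of $\Gamma$, yielding $r_\Gamma(o) \le s$. Combined with the tautological $r_X(o) \le r_\Gamma(o)$ (coming from $\Gamma \subseteq X$) and with $r_X(o) = s$, this gives $r_X(o) = r_\Gamma(o) = s$.

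For the second equality, note that $r_{\gen}(\Gamma) = s$ follows from the previous step, so in the paper's convention $\Gamma_o(\Gamma) = \Gamma_o(\Gamma,s)$. The same argument further shows that the $s$-point irredundantly spanning subsets of $o$ inside $X$ and inside $\Gamma$ coincide: one inclusion is exactly what was just proved, the other is by $\Gamma \subseteq X$. Taking closures in $\PP^r$ then yields $\Gamma_o(\Gamma) = \Gamma_o(\Gamma,s) = \Gamma_o(X,s) = \Gamma_o(X) = \Gamma$. The only point that requires care is organisational, namely intersecting the non-empty opens of $\Lambda$ coming from $U_1$, from type \textbf{A}, and from Remark \ref{existenceofopen}, into a single non-empty open where every condition holds simultaneously; no additional geometric input is required.
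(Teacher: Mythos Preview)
Your proof is correct and follows essentially the same approach as the paper: both arguments use the type \textbf{A} hypothesis $\Gamma_o(X)=\Gamma_q(X)$ to force every $X$-decomposition of a general $o\in\langle\Gamma_q(X)\rangle$ to land in $\Gamma_q(X)$, yielding $r_{\Gamma_q(X)}(o)\le r_X(o)$ and hence equality, and then identify $\Ss(X,o)$ with $\Ss(\Gamma_q(X),o)$ to conclude $\Gamma_o(\Gamma_q(X))=\Gamma_q(X)$. The only cosmetic point is that your open $U_1$ as defined gives $r_X(o)\ge s$ rather than $r_X(o)=s$; the missing inequality comes once you intersect with the open set of Remark~\ref{existenceofopen} (shrunk so that $r_X=r_{\gen}(X)$ there), exactly as you indicate in your final paragraph.
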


\begin{proof}
Let $\Uu\subset \PP^r$ be a non-empty Zariski open subset of $\PP^r$ from Remark \ref{existenceofopen}, and possibly shrink it so that all $p\in \Uu$ satisfy $r_X(p) = r_{\gen}(X)$. 

We may assume $q\in \Uu$.  Since $\Uu$ is open and $q\in \Uu\cap \langle \Gamma_q(X)\rangle\neq \emptyset$, the latter is an open subset of $\langle \Gamma_q(X)\rangle$. Let $o\in  \Uu\cap \langle \Gamma_q(X)\rangle$. Thus $r_X(o) = r_{\gen}(X)$. Note that the inclusion $\Gamma_q(X)\subseteq X$ implies $r_{\Gamma_q(X)}(o)\geq r_X(o)$. We next show that $r_{\Gamma_q(X)}(o)\leq r_X(o)$. 

Since $X$ is of {\it type} {\bf A}, one has $\Gamma _o(X)=\Gamma _q(X)$. Now, for every $z\in \Gamma_o(X)$, one has $z\in \Gamma_q(X)$. This means that the points in every rank decomposition of $o$ (with respect to $X$) sit inside $\Gamma_q(X)$. Therefore $r_{\Gamma_q(X)}(o)\leq r_X(o)$. Hence $r_{\Gamma_q(X)}(o) = r_X(o)$. 

Since $\Gamma _q(X)\subset X$ and $r_{\Gamma _q(X)}(o)= r_{\gen}(X)$, we have $\Gamma_o(\Gamma _q(X))= \Gamma _q(X)\cap \Gamma _o(X)$. Since $X$ is of type {\bf A}, $\Gamma_o(\Gamma _q(X)) = \Gamma _q(X)$. 
\end{proof}

\begin{remark}
Smooth varieties with quadratic entry locus of \cite{ir,r2} (the so-called $QEL$-manifolds) are of {\it type} {\bf IA}, see \cite[pp. 965--966]{fujrob}: the entry locus
is a smooth irreducible quadric hypersurface in its span. 
\end{remark}

\begin{theorem}\label{x5}
Let $X\subset \PP^r$ be an integral projective variety of dimension $n$. 

\begin{enumerate}
\item[(i)] If $X$ is of {\it type} {\bf A}, then $r_{\gen}(X) = r_{\gen}(\Gamma _q(X))$;

\item[(ii)] Suppose all secants of $\Gamma _q(X)$ have the expected dimension and that $X$ is of {\it type} {\bf I}. Then $X$ has {\it type} {\bf IA} if and only if $\ell (X)=
(\gamma(X)+1)r_{\gen}(X) -\gamma(X)-1$; 

\item[(iii)] Assume $r_{\gen}(X) =2$ and that $X$ is of {\it type} {\bf I}. Then $X$ is of {\it type} {\bf IA} if and only if $\ell(X) = \gamma(X)+1$;

\item[(iv)] Assume $r=2n$ and $X$ smooth. Then $X$ is of {\it type} {\bf IA} if and only if it is a $QEL$-manifold. 
\end{enumerate}
\end{theorem}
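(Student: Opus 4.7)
\textbf{Proof plan for Theorem \ref{x5}.}

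For (i), Lemma \ref{typeAgen} gives $r_X(o) = r_{\Gamma_q(X)}(o)$ for generic $o \in \langle \Gamma_q(X)\rangle$; intersecting with the open set of Remark \ref{existenceofopen} (where $r_X(o) = r_{\gen}(X)$, dense in $\langle \Gamma_q(X)\rangle$ since it contains $q$) forces $r_{\gen}(\Gamma_q(X)) = r_{\gen}(X)$.

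For (ii), the central tool in both directions is Lemma \ref{dimentrylocus} applied to $\Gamma_q(X) \subset \langle \Gamma_q(X)\rangle$ (irreducible by type I). Assuming type A, part (i) and Lemma \ref{typeAgen} yield $\dim \Gamma_o(\Gamma_q(X)) = \gamma(X)$; the dimension formula then gives $\dim \sigma_{r_{\gen}(X)-1}(\Gamma_q(X)) = \ell(X) - 1$, which under the expected-dimension hypothesis rearranges to the claimed $\ell(X) = (\gamma(X)+1)r_{\gen}(X) - \gamma(X) - 1$. Conversely, from this identity and expected dimension, $H := \sigma_{r_{\gen}(X)-1}(\Gamma_q(X))$ is a hypersurface of $\langle \Gamma_q(X)\rangle$ of degree $\geq 2$, since $\Gamma_q(X) \subseteq H$ is non-degenerate in $\langle \Gamma_q(X)\rangle$. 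For generic $p \in \Gamma_q(X) \subseteq H$, a generic line through $p$ meets $H$ in a further point, so the join $\JJ(\{p\},H)$ fills $\langle \Gamma_q(X)\rangle$; any generic $o \in \langle \Gamma_q(X)\rangle$ thus admits a decomposition $o \in \langle p, p_2, \dots, p_{r_{\gen}(X)}\rangle$ with $p_i \in \Gamma_q(X)$, which is a rank decomposition as $r_X(o) = r_{\gen}(X)$. Hence $\Gamma_q(X) \subseteq \Gamma_o(X)$; Remark \ref{existenceofopen} combined with type I forces $\Gamma_o(X)$ to be irreducible of dimension $\gamma(X)$, so equality holds, establishing type A.

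Part (iii) is the specialization $r_{\gen}(X) = 2$ of (ii), where the expected-dimension hypothesis is automatic (only $\sigma_1 = \Gamma_q(X)$ is involved), and the cone argument reduces to the fact that an irreducible non-degenerate hypersurface $\Gamma_q(X) \subset \PP^{\gamma(X)+1}$ has degree $\geq 2$. For (iv), the hypotheses $r = 2n$ and smoothness of $X$ (which make $\sigma_2(X) = \PP^{2n}$ in the non-defective case) give $r_{\gen}(X) = 2$ and $\gamma(X) = 1$, so (iii) reduces type IA to $\Gamma_q(X)$ being a plane curve. The direction QEL $\Rightarrow$ IA is the remark preceding the theorem (see \cite{fujrob}); for the converse I will appeal to the classical Terracini-type contact-locus analysis for smooth non-defective $X \subset \PP^{2n}$ due to Ionescu--Russo (see \cite{ir, r2, r3}), which forces the irreducible plane entry curve to be a smooth conic, making $X$ a QEL-manifold. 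The main obstacle is this last step in (iv): upgrading ``plane curve'' to ``smooth conic'' is not a dimension count but requires genuine geometric input from smoothness of $X$, in contrast with (i)--(iii), which are driven purely by dimension counts.
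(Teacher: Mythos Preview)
Your plan for (i)--(iii) is correct and matches the paper's approach closely. In the converse of (ii) you take a slightly different route: the paper applies Lemma~\ref{dimentrylocus} directly to $\Gamma_q(X)$ to compute $\dim\Gamma_o(\Gamma_q(X))=\gamma(X)$, which immediately forces $\Gamma_o(\Gamma_q(X))=\Gamma_q(X)$ by irreducibility; your join argument (the cone over the degree~$\ge 2$ hypersurface $H=\sigma_{r_{\gen}-1}(\Gamma_q(X))$ with vertex a general $p\in\Gamma_q(X)$) is more constructive and also works, though you should make explicit that for a \emph{fixed} general $o$ the residual intersection $h$ on the line $\langle o,p\rangle$ has honest rank $r_{\gen}-1$ for \emph{general} $p$ (a standard incidence-dimension count). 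Both approaches then finish the same way, via type~{\bf I} and the constancy of Hilbert data from Remark~\ref{existenceofopen}.

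The gap is in (iv), and you have misdiagnosed it. Upgrading ``irreducible plane curve'' to ``conic'' does \emph{not} require smoothness of $X$ or any Ionescu--Russo contact-locus machinery; the paper does it with the Trisecant lemma. Since $q$ is general and $r_{\gen}(X)=2$, the entry locus $\Gamma_q(X)$ contains a general pair $\{x,y\}\subset X$; the line $\langle x,y\rangle$ lies in the plane $\langle\Gamma_q(X)\rangle$ and meets $X$ scheme-theoretically in exactly $\{x,y\}$ by the Trisecant lemma. But $\langle x,y\rangle\cap\Gamma_q(X)\subseteq\langle x,y\rangle\cap X$, so the plane curve $\Gamma_q(X)$ meets this line in a scheme of length~$\le 2$, forcing $\deg\Gamma_q(X)=2$. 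Irreducibility (type~{\bf I}) then makes it a smooth conic automatically. Smoothness of $X$ enters only through the \emph{definition} of QEL-manifold, not through the geometric argument.
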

\begin{proof}

(i). This is a direct consequence of Lemma \ref{typeAgen}. \\
(ii). Let $\Uu$ be as in the proof of Lemma \ref{typeAgen}. Fix $q \in \Uu$ and assume that $X$ is of {\it type} {\bf IA}. Fix a general $o\in \langle \Gamma _q(X)\rangle$. Since $q\in \Uu$, we may assume $o\in \Uu$ and hence $r_X(o) =r_{\gen}(X)$. Since $X$ is of {\it type} {\bf IA}, $\Gamma _o(X)$ is irreducible and of dimension $\gamma(X)$ and $\Gamma _o(\Gamma _q(X))=\Gamma _q(X)$. Now assume that all secant varieties of $\Gamma _q(X)$ have the expected dimension. 

By (i), $r_{\gen}(\Gamma _q(X)) =r_{\gen}(X)$. Therefore 
\begin{equation}\label{ineqtypeA}
(\gamma(X)+1)(r_{\gen}(X)-1) \le \ell(X)\le (\gamma(X)+1)r_{\gen}(X)-1,
\end{equation}
where the left-hand side is $\dim\sigma_{r_{\gen}-1}(\Gamma_q(X))+1$, the right-hand side is the dimension of $\sigma_{r_{\gen}}(X)$, as $\Gamma_q(X)$ is  non-defective. 

By Lemma \ref{typeAgen}, we have the equality $\Gamma _o(\Gamma _q(X))=\Gamma _q(X)$; so $\Gamma_o(\Gamma_q(X))$ has dimension $\gamma(X)$ for a general  $o\in \langle \Gamma _q(X)\rangle$.  By Lemma \ref{dimentrylocus} applied to the variety $\Gamma_q(X)$, we see that 
\[
\gamma(X) = \dim\sigma_{r_{\gen}}(\Gamma_q(X)) + \dim \Gamma_q(X) +1 -\ell(X),
\]
which implies $\ell(X) =  (\gamma(X)+1)(r_{\gen}(X)-1)$. 

For the converse, suppose $\ell(X) =  (\gamma(X)+1)(r_{\gen}(X)-1)$. Then 
\[
\dim \Gamma_o(\Gamma_q(X))=\gamma(X),
\] 
for a general $o\in \langle \Gamma_q(X)\rangle$. Since $X$ is of {\it type} {\bf I}, $\Gamma_q(X)$ is irreducible of dimension $\gamma(X)$. Thus $ \Gamma_o(\Gamma_q(X))\subset \Gamma_q(X)$ coincide, proving that $X$ is of {\it type} {\bf IA}. \\

(iii). Let $q\in \PP^r$ be general. Suppose $\ell(X) = \gamma(X)+1$. Then $\Gamma_q(X)$ is a hypersurface in its span $\langle \Gamma _q(X)\rangle$. 
Using inequality \eqref{ineqtypeA}, we derive that $X$ is of {\it type} {\bf IA}. 

Suppose $X$ is of {\it type} {\bf IA}. Since $r_{\gen}(X) =2$, a general point $o\in \langle \Gamma_q(X) \rangle$ satisfies $r_{\Gamma _q(X)}(o) =2$. Since $\dim\Gamma_o(\Gamma_q(X))=\gamma(X)$, there is a $\gamma(X)$-dimensional family of sets $S\subset \Gamma _q(X)$ such that $\sharp (S)=2$ and $o\in \langle S\rangle$. Therefore $\ell(X)=\gamma(X)+1$. \\

(iv). Suppose $X$ is of {\it type} {\bf IA}. By Lemma \ref{dimentrylocus}, $\gamma(X) =1$. By (iii), $\ell(X) =2$, i.e. a general $\Gamma _q(X)$ is a plane curve of degree $d>1$. Since $q$ is general in $\PP^r$ and $r_{\gen}(X)=2$, $\Gamma _q(X)$ contains two general points $x, y$ of $X$. By the Trisecant lemma, the set $\{x,y\}$ is the scheme-theoretic intersection between $X$ and the line $\langle \{x,y\}\rangle$. Thus $d=2$. Since $X$ is of {\it type} {\bf I}, $\Gamma_q(X)$ is an irreducible conic. Thus $X$ is a $QEL$-manifold. 

For the converse, suppose $X$ is a $QEL$-manifold. By \cite[pp. 965--966]{fujrob}, $\Gamma _q(X)$ is an irreducible conic, and so $X$ is of {\it type} {\bf I}. 
Therefore $\gamma(X) = 1$ and $\ell(X) = \gamma(X)+1$. By (iii), $X$ is of type {\bf IA}.  
\end{proof}

\begin{example}
If $\gamma(X)=1$ and $X$ is an irreducible curve, then $X$ is of type {\bf I}. Moreover, Theorem \ref{x5}(ii) implies $\ell (X) = 2r_{\gen}(X)-2$ if and only if $X$ has {\it type} {\bf A}. 
\end{example}

\begin{remark}\label{x6}
There are many non-degenerate smooth $X\subset \PP^{2n}$ of dimension $n$ and not of {\it type} {\bf IA}. For instance, one may take the complete intersection of $n$ hypersurfaces of degrees $d_1,\ldots, d_{n}\geq 2$ such that $d_1+\cdots +d_{n}\ge 2n+1$ (so that the canonical bundle has global sections). By Theorem \ref{x5} and \cite[Theorem 2.1(i)]{ir} each such $X$ is either of {\it type} {\bf IB}, {\bf IIB} or {\bf IIA}, as otherwise $X$ would be rational. 

\begin{quote}
\noindent {\it Claim}: $X$ is not of {\it type} {\bf IIA}. \\
\noindent {\it Proof of the Claim}: On the contrary, suppose $X$ is of {\it type} {\bf A}. Fix a general $q\in \PP^{2n}$ and assume $\Gamma _o(X) =\Gamma _q(X)$ for a general $o\in \langle \Gamma _q(X)\rangle$.
Assume for the moment $\ell(X) =2$, i.e. $\Gamma _q(X)$ is a plane curve. As in the proof of Theorem \ref{x5}(iv), we see that $\Gamma _q(X)$ contains a plane conic, 
then $X$ would be a {\it local} $QEL$-manifold (called a $LQEL$-manifold) which is rational \cite[Theorem 2.1(iii)]{ir}, contradicting the assumption on $X$. 

Now assume $\ell(X)>2$. Fix a general $S = \{p_1,p_2\}\in \Ss (X,q)$. Since $X$ is of {\it type} {\bf A}, we see that 
there are infinitely many decompositions $S =\{p_1,p_2\}$ with $p_i\in C_i$, $i=1,2$, for two distinct components $C_1,C_2$ of $\Gamma_q(X)$. 

Since $q\in \langle S\rangle$, $q$ is contained in the join $\JJ(C_1,C_2)$ of the irreducible curves $C_1$ and $C_2$. Thus, for a general $o\in \JJ(C_1,C_2)$, the entry locus $\Gamma _o(X)$ contains the closure of sets $S' =\{z_1,z_2\}$ with $z_i\in C_i$, $i=1,2$. Since any join of irreducible curves has the expected dimension, one has $\dim \JJ(C_1,C_2) =3$, unless $\dim \langle C_1\cup C_2\rangle =2$; the latter case was already considered in the first paragraph of this proof.  Then we are left with the case $\dim \JJ(C_1,C_2) =  3$. A dimension count shows that, for a general $o\in \JJ(C_1,C_2)$, there are only finitely many $S'\subset C_1\cup C_2$ such that $\sharp (S')=2$ and $o\in \langle S'\rangle$. Thus $\Gamma_o(X)\neq \Gamma_q(X)$, which is a contradiction. 
\end{quote}

\end{remark}

Therefore we have the following

\begin{corollary}\label{completeintersections}
Any smooth complete intersection $X\subset \PP^{2n}$ of $n$ hypersurfaces of degrees $d_1,\ldots, d_{n}\geq 2$ such that $d_1+\cdots +d_{n}\ge 2n+1$ 
is of type {\bf B}. 
\end{corollary}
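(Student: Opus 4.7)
The plan is to extract the corollary directly from the analysis already carried out in Remark~\ref{x6}: that remark verifies, via Theorem~\ref{x5}(iv) and an internal Claim, that $X$ cannot be of type \textbf{IA} nor of type \textbf{IIA}. Since type~\textbf{A} is exactly the union of types \textbf{IA} and \textbf{IIA}, ruling these out simultaneously forces $X$ to be of type~\textbf{B}.

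First I would observe that the numerical hypothesis $d_1+\cdots+d_n \geq 2n+1$ is exactly the adjunction condition that makes $\omega_X = \Oo_X(d_1+\cdots+d_n-2n-1)$ an effective line bundle, so $h^0(\omega_X) > 0$. In particular $X$ is not uniruled, hence not rationally connected and a fortiori not rational. This is the single geometric obstruction that both type~\textbf{IA} and type~\textbf{IIA} will run up against.

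Next, I would rule out type~\textbf{IA}. Since $r=2n$ and $X$ is smooth $n$-dimensional, Theorem~\ref{x5}(iv) identifies type~\textbf{IA} with being a $QEL$-manifold. By \cite[Theorem~2.1(i)]{ir}, every $QEL$-manifold is rationally connected; combined with $h^0(\omega_X)>0$, this forces $X$ not to be a $QEL$-manifold, and hence not of type~\textbf{IA}.

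Finally, I would invoke the Claim in Remark~\ref{x6} to rule out type~\textbf{IIA}. Its argument splits on $\ell(X)$: if $\ell(X)=2$, a trisecant-type argument analogous to the one used in Theorem~\ref{x5}(iv) produces a plane conic inside $\Gamma_q(X)$, so $X$ would become an $LQEL$-manifold, rational by \cite[Theorem~2.1(iii)]{ir}, contradicting $h^0(\omega_X)>0$; if $\ell(X)>2$, one fixes two irreducible components $C_1,C_2 \subset \Gamma_q(X)$ containing the two points of a generic rank decomposition, uses the non-defectivity of the join of two irreducible curves to see $\dim \JJ(C_1,C_2)=3$, and then a dimension count shows that only finitely many pairs in $C_1 \times C_2$ span a general $o \in \JJ(C_1,C_2)$, violating the defining property of type~\textbf{A}. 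The main obstacle is the case $\ell(X)>2$, but it is already dispatched inside Remark~\ref{x6} via the join-dimension count, so at this point the corollary becomes a packaging statement that records the outcome of that discussion with no new machinery required.
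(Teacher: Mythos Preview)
Your proposal is correct and matches the paper's approach exactly: the corollary is stated in the paper immediately after Remark~\ref{x6} with the words ``Therefore we have the following,'' and your write-up faithfully unpacks that remark by ruling out type~\textbf{IA} via Theorem~\ref{x5}(iv) together with \cite[Theorem~2.1(i)]{ir}, and type~\textbf{IIA} via the Claim inside Remark~\ref{x6}. There is no additional content in the paper's proof beyond what you have recorded.
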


\section{Entry loci of surfaces}\label{surfaces}

To study entry loci for surfaces $X\subset \PP^4$ when $X$ is a linearly normal smooth surface of $\PP^4$, one may employ a result of Franchetta \cite{fra}. Although
we believe the result on entry loci of surfaces featured in Theorem \ref{m1} is probably known to experts, we could not find a reference with an explicit proof. 
We include a proof here for the sake of completeness. First, we recall the following classical definition \cite[Def. 7]{mp}: 

\begin{definition}\label{m2}
Let $Y\subset \PP^3$ be an integral surface. $X$ is said to have {\it ordinary singularities} if either it is smooth or its
singular locus is a reduced curve $D\subset Y$ with the following properties:
\begin{enumerate}
\item[(i)] $D$ is either smooth or it has finitely many ordinary triple points (i.e., points with exactly three branches, all of them
smooth and whose tangent cone spans $\PP^3$);
\item[(ii)] a smooth point of $D$ is either an ordinary non-normal node of $Y$ (i.e., at this point, $Y$ has $2$ smooth branches with distinct
tangent planes) or a pinch point of $Y$ (i.e., at this point, $Y$ is analitically equivalent to $x^2-zy^2=0$ locally at $(0,0,0)\in \CC^3$);
\item[(iii)] $Y$ has only finitely many pinch points;
\item[(iv)] at each triple point of $D$, $Y$ has an ordinary triple point (i.e., at this point, $Y$ is analitically equivalent to the surface $xyz=0$ locally at
$(0,0,0)\in \CC^3$).
\end{enumerate}
\end{definition}

\begin{theorem}\label{m1}
Let $X\subset \PP^4$ be a smooth and non-degenerate surface. A general $\Gamma _q(X)$ is reducible if and only if $X$ is an isomorphic linear
projection of the Veronese surface.
\end{theorem}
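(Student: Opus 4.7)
The plan is to reduce Theorem \ref{m1} to Franchetta's classical 1941 result on the double curve of a general projection, plus a direct verification in the Veronese case.

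First, I would reduce to a statement about curves. Since $X\subset \PP^4$ is a smooth and non-degenerate surface, Severi's classification of secant-defective smooth surfaces implies $\sigma_2(X)=\PP^4$ (the only 2-defective smooth surface is the Veronese in $\PP^5$, which cannot embed non-degenerately in $\PP^4$). Thus $r_{\gen}(X)=2$ and, by Lemma \ref{dimentrylocus}, $\gamma(X)=1$, so $\Gamma_q(X)$ is a curve on $X$. For a general $q$, the linear projection $\pi_q\colon X\to Y\defeq \pi_q(X)\subset \PP^3$ is birational onto its image and, by the classical theorem on general projections, $Y$ has at worst ordinary singularities (Definition \ref{m2}); let $D\subset Y$ be its double curve. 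Set-theoretically $\Gamma_q(X)=\pi_q^{-1}(D)$, since a point $p\in X$ lies in $\Gamma_q(X)$ iff some $p'\ne p$ in $X$ satisfies $q\in\langle p,p'\rangle$, iff $\pi_q(p)$ has a second preimage under $\pi_q$, iff $\pi_q(p)\in D$.

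Second, I would invoke Franchetta's theorem \cite{fra}: for a general projection to $\PP^3$ of a smooth non-degenerate surface $X\subset \PP^4$, the double curve $D\subset Y$ is irreducible, unless $X$ is an isomorphic linear projection of the Veronese surface. Granted irreducibility of $D$, the natural map $\pi_q^{-1}(D)\to D$ is a $2{:}1$ cover, and to conclude that $\pi_q^{-1}(D)=\Gamma_q(X)$ is irreducible one must show that this cover is connected. For a smooth $X\subset \PP^4$ and general $q$, the ordinary singularities of $Y$ include pinch points, and a pinch point of $Y$ is precisely a ramification point of the cover $\pi_q^{-1}(D)\to D$; since a $2{:}1$ cover of an irreducible curve with any ramification point is connected, $\Gamma_q(X)$ is irreducible.

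Third, I would verify reducibility in the Veronese case directly. Let $V\subset \PP^5$ be the Veronese surface, $p\in \PP^5$ a general point, $X=\pi_p(V)\subset \PP^4$ the isomorphic projection, and $q\in \PP^4$ general. The line $L=\langle p,q\rangle\subset \PP^5$ meets the cubic secant hypersurface $\sigma_2(V)$ in three distinct points $y_1,y_2,y_3$. Each $y_i$ is a rank-$2$ ternary quadric, and hence admits a $\PP^1$-family (a conic $C_i\subset V$) of decompositions as a sum of two points of $V$, whose secant line passes through $y_i\in L$. Because the line $L$ is general, the three conics $C_i$ are disjoint in $V$, and each pair $\{x_1,x_2\}\subset X$ spanning a $2$-secant through $q$ arises from a unique $y_i$. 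Hence $\Gamma_q(X)=\pi_p(C_1)\cup\pi_p(C_2)\cup\pi_p(C_3)$ is the union of three distinct conics, in particular reducible.

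The main obstacle is the translation between Franchetta's statement, classically phrased in terms of the image $D\subset Y\subset \PP^3$, and the irreducibility of the preimage $\pi_q^{-1}(D)\subset X$. The connectedness of the $2{:}1$ cover is the delicate ingredient: one must ensure that a generic smooth $X\subset \PP^4$, distinct from a projected Veronese, produces at least one pinch point in $Y$. This is standard from the ordinary-singularities theory of general projections, but it is precisely the failure of this ramification that makes the Veronese case split into three components as described.
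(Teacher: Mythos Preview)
Your approach is essentially the paper's: reduce to Franchetta's irreducibility of the double curve $D\subset Y=\pi_q(X)$, then lift to $\Gamma_q(X)=\pi_q^{-1}(D)$ via the existence of pinch points; the Veronese case is handled separately and your secant-line description of the three conics is equivalent to the paper's Steiner-surface description.

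There is one genuine imprecision. You argue: a pinch point gives a ramification point of the $2{:}1$ cover $\Gamma_q(X)\to D$, hence the cover is \emph{connected}, hence $\Gamma_q(X)$ is irreducible. The last implication is not automatic. If $\Gamma_q(X)=C_1\cup C_2$ with each $C_i\to D$ birational, the unique preimage $z$ of a pinch point does force $z\in C_1\cap C_2$, so the cover is connected --- but it could then be a reducible nodal curve, and nothing you have said rules this out. The paper closes this gap by invoking the smoothness of $\Gamma_q(X)$ at such a point $z$, obtained from the explicit local equations of the pinch-point model in \cite[p.~618]{GH}; smoothness at $z$ is incompatible with $z\in C_1\cap C_2$. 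Equivalently: a smooth connected curve is irreducible, but you have not established smoothness. Your final paragraph correctly flags the pinch-point step as the delicate one, but the actual subtlety is not merely the \emph{existence} of a pinch point (which, as you say and as the paper argues via $\tau(X)=\PP^4$, is standard); it is the local structure of $\Gamma_q(X)$ above it.
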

\begin{proof}
Fix a general $q\in \PP^4$ and set $Y:= \pi _q(Y)$ and $\pi:= \pi _{q|X}: X\to Y$. By
\cite[Theorem 8]{mp}
$Y$ is an integral surface of degree
$\deg (X)$ with ordinary singularities. Set $C:= \pi^{-1}(D)$. We have $\Gamma _q(X) = C$.

First assume  that $X$ is an isomorphic projection of the
Veronese surface. In this case $Y$ is the {\it Steiner's Roman Surface}, i.e., a degree $4$ integral surface with a triple point,
$p$, and $D$ is the union of $3$ lines through $p$. The curve $C$ is a union of $3$ smooth conics, each of them sent by the $2:1$ map $\pi$ 
onto a line of $D$.

Now assume that $X$ is not an isomorphic linear projection of the Veronese surface. We show that $C$ is irreducible. 
By a theorem of Franchetta (\cite{fra}, or see \cite[Theorem 5]{mp} for a complete proof
in modern language) $D$ is irreducible. ($C$ and $D$ are Weil divisors and their interpretation in terms of conductors is
explained in \cite[\S 2]{pi}.) Assume that
$C$ is not irreducible. Let $C_i$ be the irreducible components of $C$. Since $\pi_{|C}$ has degree two, one has
$2 = \deg(\pi_{|C}) = \sum_{i=1}^t \deg(\pi_{|C_i})$; thus $C$ has two irreducible components, $C_1$ and $C_2$. 
Now, the map $\pi_{|C_i}$ is an isomorphism over each smooth point of $D$, because it is bijective. 

Call $m\in \NN$ the number of pinch points of $Y$. Since $Y$ have only ordinary singularities, its pinch points are smooth points of $D$ and they correspond to tangent lines $L$ to $X$ passing through $q$ with $\deg (L\cap X)=2$. Since $\tau (X)=\PP^4$  (see e.g. \cite[Th. 5.1 or Example after Corollary 5.2]{fl}, \cite[Theorem 1.4]{z}), we have $m>0$. Fix a pinch point $p\in D$ and let $z$ be the corresponding point of $C$. Since $z$ is the only point of $X$ with $\pi (z) = p$, one derives $z\in C_1\cap C_2$. Thus $z$ is a singular point of $C$, contradicting smoothness of $C$ proven in \cite[p. 618]{GH}: they give explicitly local 
equations for $D$ and its cover $C$ (in their notation, $C$ corresponds to $s=0$ locally at the origin), showing that they are smooth at our points $p$ and $z$ respectively. 
\end{proof}

\begin{remark}
Let $X\subset \PP^4$ be a degree $d$ integral and non-degenerate surface with only isolated singularities. Note that we may take as $X$ a general linear projection of a smooth and non-degenerate surface $X'\subset \PP^r$, $r\ge 5$, i.e. we are not assuming that the singular points of $X$ (if any) are normal singularities. 
Let $g$ be the arithmetic genus of a general hyperplane section $X\cap H$ of $X$, where $H\subset \PP^4$ is a general hyperplane. Since $X$ has only finitely many singular points, Bertini's theorem gives that $X\cap H$ is a smooth and irreducible curve \cite[II.8.18 and III.7.9]{h}. Thus $X\cap H$  has geometric genus $g$. Since $X$ is non-degenerate, $X\cap H$ spans $H$ and hence we may apply Castelnuovo's upper bound for $g$ in terms of $d$ \cite[Theorems 3. 7 and 3.13]{he}. When $X$ is a birational linear projection of a smooth surface $X'\subset \PP^r$ ($r\ge 5$), $d$ and $g$ are then the invariants of a smooth and non-degenerate curve of $\PP^{r-1}$. Castelnuovo's  upper bound for the genus of curves in $\PP^{r-1}$ excludes more pairs $(d,g)$; see \cite[3.7, 3.11, 3.15, 3.17, 3.22]{he}.
\end{remark}

\begin{remark}
Let $X\subset \PP^4$ be an integral and non-degenerate surface with only isolated singularities. Let $\Uu$ be a non-empty open subset of $\PP^4$ as in Remark \ref{existenceofopen}. The entry loci give rise to a natural invariant of $X$ as follows. 

We have an algebraic family $\{\Gamma _q(X)\}_{q\in \Uu}$ of effective Weil divisors of $X$, whose irreducible components are reduced.  
Let $\Vv$ be the image of the family $\{\Gamma _q(X)\}_{q\in \Uu}$
in the Hilbert scheme of $X$. The family $\Vv$ may have lower dimension, as for a general $q\in \Uu$ there may exist infinitely many $p\in \Uu$ with $\Gamma _p(X)=\Gamma _q(X)$. Its dimension $\dim \Vv$ is another invariant of the embedded variety $X$. 
\end{remark}

To study entry loci for surfaces in $\PP^4$, it is useful to first record the following 

\begin{proposition}\label{degentryloc}
Let $X\subset \PP^4$ be a degree $d$ integral and non-degenerate surface of sectional genus $g$ and let $q\in \PP^4$ be general. 
Then $\Gamma_q(X)$ is a curve of degree  $\deg (\Gamma _q(X)) = (d-1)(d-2)-2g$. 
\begin{proof}
By Lemma \ref{dimentrylocus}, $\Gamma_q(X)$ is a curve. Let $H\subset \PP^4$ be a general hyperplane containing $q$. Since $q$ is general, 
the hyperplane section $C = X\cap H$ is a genus $g$ smooth, connected and non-degenerate
curve of $H$. Let $\Uu$ be a non-empty open subset of $\PP^4$ as in Remark \ref{existenceofopen}. We may assume $q\in \Uu$. Let $\pi_q$ be the linear projection from $q$; thus $\pi_q(C)$ is a curve with ordinary nodes of degree $d$ and geometric genus $g$. Hence $\pi_q(C)$ has $(d-1)(d-2)/2 - g$ nodes. The set of nodes of $\pi_q(C)$ is in bijection with the elements of $\Ss(C,q)$. So $\sharp (\Ss (C,q)) =(d-1)(d-2)/2-g$ and hence, by definition, $\sharp (\Gamma _q(X)\cap H) =(d-1)(d-2)-2g$. Therefore $\deg (\Gamma _q(X)) = (d-1)(d-2)-2g$. 
\end{proof}
\end{proposition}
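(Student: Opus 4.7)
The plan is to reduce the computation of $\deg(\Gamma_q(X))$ to a standard node-count for a plane curve via a general hyperplane section. First, I would apply Lemma \ref{dimentrylocus} to check that $\Gamma_q(X)$ is indeed a curve: here $r=4$, $\dim X=2$, and $\sigma_2(X)=\PP^4$ (otherwise $\Gamma_q(X)$ would be empty for a general $q$), so $r_{\gen}(X)=2$ and $\gamma(X)=\dim\sigma_1(X)+2+1-4=1$. Thus to compute the degree it suffices to count the number of points in $\Gamma_q(X)\cap H$ for a general hyperplane $H\subset\PP^4$ passing through $q$.

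Next I would introduce $C:=X\cap H$ and, using Bertini together with the hypothesis that $X$ has only finitely many non-normal points (implicit in the assumption that $g$ is the sectional genus and $X\cap H$ is smooth for general $H$), note that $C$ is smooth, connected, non-degenerate in $H\cong\PP^3$, of degree $d$ and geometric genus $g$. Then I would project $C$ from $q$ to obtain a plane curve $\pi_q(C)\subset\PP^2$ of degree $d$. For $q$ sufficiently general (say, avoiding $\sigma_1(X)$ and belonging to the open locus $\Uu$ from Remark \ref{existenceofopen}), classical general-position arguments show that the singularities of $\pi_q(C)$ are exactly ordinary nodes, and the genus-degree formula gives that their number equals $\binom{d-1}{2}-g=(d-1)(d-2)/2-g$.

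The key linking step is the bijection between the nodes of $\pi_q(C)$ and the set $\Ss(C,q)$ of unordered pairs $\{p_1,p_2\}\subset C$ irredundantly spanning $q$: a node corresponds exactly to a secant line of $C$ through $q$. Here I would also observe that every such pair lies in $\Gamma_q(X)\cap H$, and conversely any point of $\Gamma_q(X)\cap H$ participates in a decomposition $\{p,p'\}$ whose second member $p'\in\langle p,q\rangle\subset H$ also lies in $C$. Hence $\Gamma_q(X)\cap H$ is precisely the union of the two-element sets in $\Ss(C,q)$, and therefore $\sharp(\Gamma_q(X)\cap H)=2\,\sharp\Ss(C,q)=(d-1)(d-2)-2g$, giving the claimed degree.

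The main subtlety, and the only point that requires care, is justifying that $\pi_q(C)$ has only ordinary nodes as singularities (so that the node count is exactly $\binom{d-1}{2}-g$) and that no point of $C$ is accidentally collapsed onto a singular point of $X$ or sent to a cusp or tacnode: this is what forces the choice of $q$ in a suitable open set. Once one grants this generic behaviour, which follows from standard trisecant-type arguments together with $q\notin\sigma_1(X)$ and $q\in\Uu$, the rest is bookkeeping and the degree formula $(d-1)(d-2)-2g$ drops out immediately.
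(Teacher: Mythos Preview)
Your proposal is correct and follows essentially the same route as the paper's proof: intersect with a general hyperplane $H$ through $q$, project the smooth sectional curve $C=X\cap H$ from $q$ to a nodal plane curve, count the nodes via the genus formula, and translate back to $\sharp(\Gamma_q(X)\cap H)=2\,\sharp\Ss(C,q)$. You are in fact slightly more explicit than the paper about the identification $\Gamma_q(X)\cap H=\bigcup_{S\in\Ss(C,q)}S$ and about the genericity needed to ensure $\pi_q(C)$ has only ordinary nodes, but the underlying argument is identical.
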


\begin{example}
A complete intersection $X\subset \PP^4$ of two general hypersurfaces of degrees $d_1=2$ and $d_2=3$ is a smooth canonically embedded K3 surface of degree $6$ and genus $4$. By Corollary \ref{completeintersections}, $X$ is of {\it type} {\bf B}. By Theorem \ref{m1} and Lemma \ref{degentryloc}, 
the entry locus $\Gamma_q(X)$ is an irreducible curve of degree $12$. 
Let $D$ be the reduced singular locus of the projection $\pi_q(X)$ of $X$ from the point $q$; $D$ is a curve of degree $6$ and arithmetic genus $4$ \cite[Example 4.17]{ma} and it is the base locus of the inverse of the birational morphism $\pi_{q_{|X}}: X\rightarrow \pi_q(X)$. The entry locus $\Gamma_q(X)$ is a double cover of $D$. 
\end{example}

\begin{lemma}\label{uuuu0}
Let $X\subset \PP^3$ be an integral and non-degenerate curve. 
\begin{enumerate}
\item[(i)] If $X$ is not a rational normal curve then $\sharp (\mathfrak{S}(\Gamma_q(X)))=1$.  

\item[(ii)] If $X$ is a rational normal curve then $\Sigma(\Gamma_q(X))$ is a line with two points removed.

\end{enumerate}
\end{lemma}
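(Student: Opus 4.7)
My plan is to describe $\Gamma_q(X)$ explicitly as a disjoint union of decomposition pairs via the classical double point formula, and then compute $\mathfrak{S}(\Gamma_q(X))$ by intersecting the pair Segre loci from \S\ref{segrepoints}.

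Since $X$ is an integral non-degenerate curve in $\PP^3$, $\sigma_2(X) = \PP^3$, so $r_{\gen}(X) = 2$, and Lemma \ref{dimentrylocus} yields $\gamma(X) = \dim \sigma_1(X) + \dim X + 1 - 3 = 0$. Thus $\Gamma_q(X)$ is a finite set. Applying the double point formula to the birational generic projection $\pi_q : X \to \PP^2$, one gets that the number of chords of $X$ through $q$ is $N = \binom{d-1}{2} - p_a(X)$, where $d = \deg X$, and $\Gamma_q(X) = \bigsqcup_{i=1}^{N} \{p_i^{(1)}, p_i^{(2)}\}$ with each pair spanning a chord of $X$ through $q$.

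For (ii), $X$ is the twisted cubic ($d = 3$, $p_a = 0$), so $N = 1$ and $\Gamma_q(X) = \{p_1, p_2\}$. A point $o \in \PP^3 \setminus \{p_1, p_2\}$ lies in $\Sigma(\Gamma_q(X))$ iff $\pi_o(p_1) = \pi_o(p_2)$, equivalently iff $o$ lies on the chord $\langle p_1, p_2 \rangle$. Hence $\Sigma(\Gamma_q(X)) = \langle p_1, p_2 \rangle \setminus \{p_1, p_2\}$, a line with two points removed. For (i), $X$ is non-RNC, so $d \geq 4$ (the only non-degenerate integral curves of degree $3$ in $\PP^3$ are twisted cubics). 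By Castelnuovo's bound for non-degenerate integral curves in $\PP^3$, $N \geq 2$. The condition defining $\mathfrak{S}(\Gamma_q(X))$ in terms of the decomposition pairs forces a Segre point $o$ to lie on each of the $N$ distinct chords $\langle p_i^{(1)}, p_i^{(2)} \rangle$ through $q$; since any two distinct lines through $q$ meet only at $q$, this intersection equals $\{q\}$, giving $\mathfrak{S}(\Gamma_q(X)) = \{q\}$ and $\sharp(\mathfrak{S}(\Gamma_q(X))) = 1$.

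The main obstacle is fixing the convention for $\mathfrak{S}$ on a reducible $0$-dimensional scheme like $\Gamma_q(X)$: the natural choice here is to take the intersection of pair-Segre-loci across the decomposition pairs, consistent with the pair-based definition of Segre points in \S\ref{segrepoints}. A secondary issue is ensuring $N \geq 2$ for every non-RNC integral (possibly singular) curve, which follows from Castelnuovo's bound in the smooth case and from a case-by-case analysis via the delta invariants for singular non-degenerate integral curves in $\PP^3$.
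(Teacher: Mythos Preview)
Your argument is correct and the core is the same as the paper's: both identify $\Gamma_q(X)$ as the union of the $N$ secant pairs through $q$ and compute the locus as the intersection of the corresponding chords, giving $\{q\}$ when $N\ge 2$ and the unique chord minus its two endpoints when $N=1$. The one real difference is in securing $N\ge 2$ for a non-RNC $X$: the paper simply invokes \cite[Theorem~3.1]{cc2}, which says directly that $\sharp(\Ss(X,q))=1$ for general $q$ if and only if $X$ is the twisted cubic, and this applies uniformly to all integral non-degenerate curves. Your route via the node count $N=\binom{d-1}{2}-p_a$ together with Castelnuovo's bound works cleanly for smooth $X$, but, as you yourself flag, for singular $X$ one must argue that the generic planar projection acquires exactly $\binom{d-1}{2}-p_a(X)$ \emph{new} nodes beyond the images of the existing singularities; the Chiantini--Ciliberto citation sidesteps this entirely and is the more economical choice here. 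On the convention you raise for $\mathfrak{S}$ of a finite scheme: the paper's proof of (i) literally computes the unique point lying on every $\langle D\rangle$, $D\in\Ss(X,q)$, and in (ii) characterizes the locus via the condition $\Gamma_{q'}(X)=A$, so your ``intersection of pair-Segre-loci over the decomposition pairs'' is exactly the reading the paper intends, even though neither of you makes the convention fully explicit.
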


\begin{proof}
Fix a general $q\in \PP^3$. We have $r_X(q) =2$. Moreover, $\sharp (\Ss (X,q)) =1$ if and only if $X$ is a rational normal curve \cite[Theorem 3.1]{cc2}.\\
(i). Assume $X$ is not a rational normal curve and take $A, B\in \Ss (X,q)$ such that $A\ne B$. The sets $\langle A\rangle$ and $\langle B\rangle$
are lines containing $q$. Thus either $\langle A\rangle = \langle B\rangle$ or $\langle A\rangle \cap \langle B\rangle=\emptyset$. In the latter case, $q$
is unique point contained in all $\langle D\rangle$, $D\in \Ss (X,q)$.\\
(ii). Assume $X$ is a rational normal curve and write $\{A\} = \Ss (X,q)$. All points of $q'\in \langle A\rangle \setminus A$ have rank two and such that
$\Gamma_q'(X) = A$, because $X$ has no trisecant lines. Therefore $A =\Gamma _{q'}(X)$ if and only if $q'\in \langle A\rangle \setminus A$. 
\end{proof}

\begin{corollary}\label{uuuu1}
Let $X\subset \PP^4$ be an integral, normal and non-degenerate surface. Assume that $X$ is a cone with vertex $v$. Fix a general $q\in \PP^4$. For any hyperplane $H\subset \PP^4$ such that $v\notin H$, set $C_H:= H\cap X$ and $\{q_H\}:= \langle \{v,q\}\rangle\cap H$.
\begin{enumerate}

\item[(i)] All triples $(H,C_H,q_H)$ are projectively equivalent and $r_{C_H}(q_H) =2$ for all $H\subset \PP^4\setminus \{q\}$.

\item[(ii)] Fix $H\subset \PP^4\setminus \{v\}$. Then $\Gamma _q(X)$ is the union of $2\times \left(\sharp (\Ss (C_H,q_H)\right)$ lines through $v$ and it is the cone with vertex $v$ and a set $A\in \Ss(C_H,q_H)$ (for fixed $H$) as base.

\item[(iii)] Assume $X$ normal. Set $d:= \deg (X)$ and $g:= p_a(C_H)$. Then $\Gamma _q(X)$ is the union of $(d-1)(d-2)-2g$
lines through $v$. So $X$ has {\it type} {\bf II}. 

\item[(iv)] If $\deg (X) \ne 3$ then $\Sigma(\Gamma_q(X)) =\langle \{v\}\cup q\rangle \setminus \{v\}$ for a general $q\in \PP^4$.

\item[(v)] If $\deg (X)=3$ then $\Sigma(\Gamma_q(X))$ is a plane through $v$ with two lines through $v$ removed.
\end{enumerate}
\end{corollary}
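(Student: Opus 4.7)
The plan is to push everything down to the base curve $C_H\subset H\cong \PP^3$ via the vertex projection $\pi_v\colon \PP^4\setminus\{v\}\to \PP^3$, and then transfer the answers back.

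For (i), $\pi_v$ restricts to an isomorphism on any hyperplane $H\not\ni v$ and identifies $C_H$ with a common model of the base curve while sending $q$ to $q_H$; varying $H$ realizes the triples $(H,C_H,q_H)$ as projectively equivalent, and genericity of $q$ gives genericity of $q_H$ in $H$. The assumption $\sigma_2(X)=\PP^4$ descends to $\sigma_2(C_H)=H$, so $r_{C_H}(q_H)=2$.

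For (ii), any decomposition $q\in \langle p_1,p_2\rangle$ with $p_i\in X$ places $p_i$ on a line $L_i=\langle v,z_i\rangle\subset X$ with $z_i\in C_H$, and $\pi_v$ pushes this to $\{z_1,z_2\}\in \Ss(C_H,q_H)$. Conversely, each $A=\{z_1,z_2\}\in \Ss(C_H,q_H)$ determines the plane $\Pi_A=\langle v,A\rangle$, which contains $q$ and carries a pencil of decompositions sweeping out $L_1\cup L_2$. Hence $\Gamma_q(X)$ is the cone from $v$ over $\bigcup_{A\in \Ss(C_H,q_H)} A$, a union of $2\sharp\Ss(C_H,q_H)$ lines through $v$. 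Part (iii) then follows by combining this with Proposition~\ref{degentryloc}: the degree of the line arrangement equals the number of lines, giving $2\sharp\Ss(C_H,q_H)=(d-1)(d-2)-2g$, a positive even integer and hence $\geq 2$, so $\Gamma_q(X)$ is reducible and $X$ is of type~\textbf{II}.

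For (iv)--(v), note that each plane $\Pi_A$ contains $v$ and $q$, hence the entire line $\langle v,q\rangle$; projection from $p\in \Pi_A\setminus \Gamma_q(X)$ collapses $\Pi_A$ to a line and identifies the two components $L_1,L_2\subset \Pi_A$. Following the convention recorded in Lemma~\ref{uuuu0}, the Segre locus of a reducible entry locus consists of those $p$ at which every component is identified with a partner; since the pairs $A\in \Ss(C_H,q_H)$ partition the $N$ lines, this happens exactly when $p\in \Pi_A$ for every $A$, so
\[
\Sigma(\Gamma_q(X))=\Bigl(\bigcap_{A\in \Ss(C_H,q_H)}\Pi_A\Bigr)\setminus \Gamma_q(X).
\]
When $d=3$, $\sharp\Ss=1$ and the intersection is a single plane, yielding (v) after removing the two lines. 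When $d\neq 3$, $\sharp\Ss\geq 2$; the no-trisecant property at a general $q_H\in H$ forces $\langle A\rangle\neq \langle A'\rangle$ in $H$ for $A\neq A'$, so $\Pi_A\neq \Pi_{A'}$, and two distinct planes sharing $\langle v,q\rangle$ meet precisely in that line. Intersecting over all $A$ therefore yields $\langle v,q\rangle$, and removing $v\in \Gamma_q(X)$ gives (iv).

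The subtle point is to unpack the convention for the Segre locus of a reducible variety used in Lemma~\ref{uuuu0}, and to justify that the ``non-canonical'' pairings of the $N$ lines contribute only $0$-dimensional intersections (hence no extra $1$-dimensional Segre loci) for generic $q$; both ultimately rest on the no-trisecant lemma at a general point of $H$, together with a standard dimension count for the intersection of the $N/2$ mis-paired planes.
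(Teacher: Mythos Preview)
Parts (i)--(iii) follow the paper's approach: the paper likewise uses the projection $\pi_v$ to identify all triples $(H,C_H,q_H)$, obtains the cone description of $\Gamma_q(X)$ by pushing decompositions of $q$ down to $\Ss(C_H,q_H)$ and back, and counts the lines via the node formula exactly as in Proposition~\ref{degentryloc}.

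For (iv)--(v) the paper takes a shorter route than you do: it simply observes that the cone structure makes the statement equivalent, after intersecting with a hyperplane $H\not\ni v$, to the corresponding statement for the finite entry locus $\Gamma_{q_H}(C_H)\subset H\cong\PP^3$, and then quotes Lemma~\ref{uuuu0}(i) and (ii) respectively. Your plane-intersection computation $\bigcap_{A}\Pi_A$ is precisely the lift to $\PP^4$ (via $\pi_v^{-1}$) of the intersection $\bigcap_{A\in\Ss(C_H,q_H)}\langle A\rangle$ that Lemma~\ref{uuuu0} computes in $H$, so the substance is the same.

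Your final paragraph, however, contains a genuine error. Under the reading you adopt there (``every component is identified with \emph{some} partner''), your claim that non-canonical pairings contribute only $0$-dimensional intersections fails already for $N=4$, which occurs e.g.\ for $d=4$, $g=1$. Indeed the four base points $z_1,z_2,z_3,z_4\in C_H$ all lie in the single plane $P=\langle\,\langle A\rangle\cup\langle B\rangle\,\rangle\subset H$, since both canonical secants pass through $q_H$; hence the cross-secants $\langle z_1,z_3\rangle$ and $\langle z_2,z_4\rangle$ also lie in $P$ and meet at a point $q'_H\in P$. Thus $\Pi_{13}\cap\Pi_{24}=\langle v,q'_H\rangle$ is a full line through $v$, not just $\{v\}$, and the trisecant lemma does nothing to prevent this. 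The paper avoids this entirely: the quantity appearing in Lemma~\ref{uuuu0} and in (iv)--(v) is by construction $\bigcap_{A\in\Ss(C_H,q_H)}\langle A\rangle$ (respectively its cone), so non-canonical pairings never enter. Your displayed formula is therefore already the correct answer, and the last paragraph should simply be deleted.
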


\begin{proof}
(i). Fix hyperplanes $H, M\subset \PP^4\setminus \{v\}$. Let $\pi : \PP^4\setminus \{v\} \to \PP^3$ be the projection from $v$. Note that $\pi _{|H}: H\to \PP^3$ and $\pi _{|M}: M\to \PP^3$ are isomorphisms.

The automorphism $f = \pi _{|M}^{-1}\circ \pi _{|H}: H\to M$ satisfies $f(q_H)=f(q_M)$. Since $X$ is a cone with vertex $v$, $f$ induces a linear isomorphism $f_{|C_H}: C_H\to C_M$. Thus $r_{C_H}(q_H) = r_{C_M}(q_M)=2$. Since $q$ is general in $\PP^4$, $q_H$ is general in $H$. Thus $r_{C_H}(q_H)=2$ and the integer $\sharp (\Ss (C_H,q_H))$ is the same for general $q$ and general hyperplanes.

(ii). As in the proof of Proposition \ref{degentryloc}, one has 
\[
\sharp (\Ss (C_H,q_H)) = (d-1)(d-2)/2-g. 
\]

For a general $q_H\in H$, as $q$ is outside the locus of trisecants to $C_H$, we also have $A\cap B =\emptyset$ for all $A, B\in \Ss (C_H,q_H)$ such that $A\ne B$. Thus, varying the hyperplane $H\subset \PP^4$ such that $v\notin H$, we see that $\Ss (X,q)\supseteq \cup_{A\in \Ss(C_H,q_H)}A$; the latter is $D\setminus \{v\}$, where $D$ is the cone with vertex $v$ and base given by a (arbitrary) finite set $A\in \Ss(C_H,q_H)$ (for fixed $H$). Hence $\Gamma _q(X)\supseteq D$. 

To see that $D = \Gamma _q(X)$, let $z\in \Gamma_q(X)\setminus D$. Then there exists $p\in X$ such that $q\in \langle z,p\rangle$. Since $z\neq v$, 
there exists a hyperplane $H\subset \PP^4$ such that $v\notin H$ and $\langle z,p\rangle \subset H$; note that, using the notation above, $q=q_H$. 
Then $\lbrace z,p\rbrace \in \Ss(C_H, q_H)$ and so $\langle z,p\rangle\subset D$, a contradiction. 

(iii). This is a consequence of the description of $\Gamma_q(X)$ given in (ii). 

(iv). Fix a hyperplane $H\subset \PP^4$ such that $v\notin H$. This statement is  equivalent to the equality $\mathfrak{S}(\Gamma_{q_H}(C_H)) = \{q_H\}$, as in Lemma \ref{uuuu0}(i).

(v). This is a consequence of Lemma \ref{uuuu0}(ii). 
\end{proof}

\begin{proposition}\label{mindegsurf}
Let $X\subset \PP^4$ be an integral and non-degenerate surface with only isolated singularities of minimal degree $\deg(X)=3$. 
Then $X$ is of {\it type} {\bf A}. Moreover, $X$ is of {\it type} {\bf II} if and only if it is a cone. 
\begin{proof}

Such surfaces are classically classified, and their general hyperplane section $X\cap H$ is a rational normal curve: either $X$ is a cone over a rational normal curve of $\PP^3$ or it is isomorphic to the rational ruled surface $\mathbb F_1$ embedded by the complete linear system $|h+2f|$. In both cases, by Proposition \ref{degentryloc}, $\Gamma _q(X)$ is a conic. Thus $\gamma(X)=1$ and $\ell(X) = \gamma(X)+1$. In this case, one can slightly improve Theorem \ref{x5}(iii): for any $o\in \langle \Gamma_q(X)\rangle$, one can check that $\Gamma_o(X)$ coincides with the conic $\Gamma_q(X)$, and so $X$ is of {\it type} {\bf A}. 
This shows the first statement. 

Suppose $X$ is a cone with vertex $v$. Then apply Corollary \ref{uuuu1}(iii). 

Now assume $X =\mathbb F_1$. The lines of $X$ are only the section $h$ and the fibers $|f|$ of the ruling. Since $X$ has only $\infty ^1$-many lines, a general $\Gamma _q(X)$ must be irreducible and so $X$ is of {\it type} {\bf I}. This shows the statement. 
\end{proof}
\end{proposition}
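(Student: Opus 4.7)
My plan is to use the classical classification of surfaces of minimal degree $d=\operatorname{codim} X + 1 = 3$ in $\PP^4$: since $X$ has only isolated singularities, $X$ is either the cone over a twisted cubic in a hyperplane, or it is the smooth cubic scroll $\mathbb F_1$ embedded by $|h+2f|$. In both cases a general hyperplane section $X\cap H$ is a rational normal cubic, so $g=0$, and Proposition \ref{degentryloc} gives
\[
\deg \Gamma_q(X) = (d-1)(d-2)-2g = 2.
\]
Combined with Lemma \ref{dimentrylocus} this yields $\gamma(X)=1$ and, since $\Gamma_q(X)$ is a degree $2$ non-linear curve, $\ell(X)=2=\gamma(X)+1$, i.e.\ $\Gamma_q(X)$ is a (possibly reducible) plane conic.

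For \textbf{type} \textbf{A}, the idea is to refine Theorem \ref{x5}(iii): I would fix a general $q$ and pick an arbitrary point $o\in\langle\Gamma_q(X)\rangle\setminus\Gamma_q(X)$. The plane $\Pi := \langle\Gamma_q(X)\rangle$ meets $X$ in a scheme containing the conic $\Gamma_q(X)$; I would check that this intersection contains no further component, so that any line through $o$ in $\Pi$ that meets $X$ in two points forces those points to lie on $\Gamma_q(X)$. Since every line of $\Pi$ through $o$ meets the conic $\Gamma_q(X)$ in two points, those two points certify $o\in\Gamma_{o'}(X)$-decompositions by points of $\Gamma_q(X)$, giving $\Gamma_o(X)\supseteq\Gamma_q(X)$; by equidimensionality (Remark \ref{x1}) we get equality. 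The small subtlety is justifying that $\Pi\cap X$ contains no extra curve: in the cone case $\Pi$ passes through the vertex and $\Gamma_q(X)$ is a pair of rulings (Corollary \ref{uuuu1}(ii)), and any further component of $\Pi\cap X$ would be another ruling through the vertex inside $\Pi$, contradicting generality of $q$; in the scroll case $\Pi$ is a general plane meeting $X$ in the two fibers (or a section plus fiber) that make up the conic.

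For the \textbf{type I/II} dichotomy, if $X$ is a cone with vertex $v$, Corollary \ref{uuuu1}(iii) gives $\Gamma_q(X)$ as a union of $(d-1)(d-2)-2g=2$ distinct lines through $v$, hence reducible, so $X$ is of type \textbf{II}. If instead $X\cong\mathbb F_1$ embedded by $|h+2f|$, the lines on $X$ are exactly the exceptional section $h$ and the $\infty^1$ fibers of the ruling; any reducible plane conic in $X$ would have to be a union of two such lines, so a one-parameter family of such conics does not exist (the section $h$ is rigid and two distinct fibers are skew in $\mathbb F_1$ under this embedding, hence not coplanar). Therefore, for $q$ general, $\Gamma_q(X)$ is forced to be an irreducible conic and $X$ is of type \textbf{I}. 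This establishes both statements of the proposition.
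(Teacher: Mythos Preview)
Your approach is exactly the paper's: classification of degree-$3$ surfaces with isolated singularities, Proposition \ref{degentryloc} giving $\deg\Gamma_q(X)=2$ and hence $\ell(X)=\gamma(X)+1=2$, Corollary \ref{uuuu1}(iii) for the cone case, and the line count on $\mathbb F_1$ for irreducibility. The paper's type~\textbf{A} step is literally the phrase ``one can check that $\Gamma_o(X)$ coincides with the conic $\Gamma_q(X)$'', so your expansion is useful---but it contains a gap.

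The inclusion $\Gamma_o(X)\supseteq\Gamma_q(X)$ follows, as you say, from the pencil of lines in $\Pi=\langle\Gamma_q(X)\rangle$ through $o$; this direction needs nothing about $\Pi\cap X$. The problem is the reverse inclusion. Your appeal to ``equidimensionality (Remark \ref{x1})'' does not give it: equidimensionality only says every component of $\Gamma_o(X)$ has dimension $1$, not that there are no components beyond the conic. And your analysis of $\Pi\cap X$ does not help either, because a decomposition $\{p_1,p_2\}$ of $o\in\Pi$ need not lie in $\Pi$ at all---secant lines to $X$ through $o$ can leave the plane. The clean fix is a degree argument: since $q\in\Uu\cap\Pi$ with $\Uu$ as in Remark \ref{existenceofopen}, the intersection $\Uu\cap\Pi$ is nonempty open in $\Pi$; for a general $o\in\Pi$ one then has $o\in\Uu$, hence $\deg\Gamma_o(X)=2$, and the containment $\Gamma_o(X)\supseteq\Gamma_q(X)$ between two degree-$2$ curves forces equality. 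This is enough for type~\textbf{A}, which only asks about general $o\in\langle\Gamma_q(X)\rangle$.

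Your $\mathbb F_1$ argument is a correct sharpening of the paper's one-liner: two distinct fibers are disjoint and hence span a $\PP^3$, so the only reducible plane conics on $X$ are of the form $h+f$, a one-parameter family whose spanning planes sweep out at most a threefold in $\PP^4$; thus for general $q$ the conic $\Gamma_q(X)$ is irreducible.
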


\begin{theorem}\label{surfdegfour}
Let $X\subset \PP^4$ be an integral and non-degenerate surface with only isolated singularities of almost minimal degree $\deg(X)=4$ and sectional
genus $g=1$. 
\begin{enumerate}
\item[(i)] When $X$ is a cone with vertex $v$, $\Gamma _q(X)$ is a hyperplane section of $X$ containing $v$, i.e. it is formed by four lines through $v$, no three of them coplanar.
\item[(ii)] When $X$ is not a cone, $\Gamma_q(X)$ coincides with the general hyperplane section and $\sharp \mathfrak{S}(\Gamma _q(X)))=4$. 
\end{enumerate}

\begin{proof}

The general hyperplane section $C =X\cap H$ is a quartic curve by Lemma \ref{degentryloc}. The classification of surfaces in the statement is given in \cite{bp} and \cite[8.11 and 8.12]{fuji}. The surface $X$ is the complete intersection of two quadrics;  moreover, $C$ is a complete intersection of two quadric surfaces and hence it is a linearly normal elliptic curve. 

Assume $X$ is a cone and let $v$ be its vertex. Take a hyperplane $H\subset \PP^4$ such that $v\notin H$ and let $q' = \langle \{v,q\}\rangle \cap H$.
Since $X\cap H$ is a smooth space curve of degree three and $q$ is general, $q'$ is a general point in $H$. Thus $\sharp (\Ss (C,q')) =2$, i.e. $\Gamma _q(X)\cap H$ is formed by four points (as $C$ is a linearly normal elliptic curve), and $\Gamma _q(X)$ is the union of the four lines passing through $v$ and containing one of these points.

Assume $X$ is not a cone. In this case, $X$ does not
contain $\infty ^1$-many lines. Indeed, $X$ is a complete intersection of quadrics in $\PP^4$ and so $\omega_X \cong \Oo_X(-1)$. 

Assuming there are infinitely many lines, since there is a finite number of singular points of $X$, there exists a line $L\subset X_{\reg}$. 
Since there is a positive dimensional family of lines,  $L^2\geq 0$, i.e. the degree of the normal bundle of $L$ is non-negative. Note that $\omega_X\cdot \Oo_X(L) = -\deg(L) = -1$; 
however, this contradicts adjunction: $-2 = \deg(\omega_L) = L^2 + \omega_X\cdot L$. Thus there are only finitely many lines. This implies that the entry locus $\Gamma _q(X)$ is either irreducible or a union of two irreducible conics. 

Suppose $\Gamma_q(X) = D\cup D'$, where $D, D'$ are conics (possibly the same). Since $X$ is a surface, 
there is no a two-dimensional family of conics. Therefore, there are at most a finite number of families
of conics, each being at most one-dimensional. Suppose $C$ and $C'$ belong to two (possibly the same) families $\mathcal F, \mathcal F'$, 
each of dimension at most one. 

We have two cases according to the value of $\dim \langle D\cup D'\rangle$. 
Suppose $\langle D\cup D'\rangle = \PP^4$. Then Lemma \ref{y9} implies that there is no $q\in \PP^4$ such $\Gamma_q(X)= D\cup D'$. 
Suppose $\langle D\cup D'\rangle = \PP^3$. By Lemma \ref{y5} there are at most $\infty^1$-many points in $\PP^4$ for fixed conics $D, D'$. 
However, since $\mathcal F$ and $\mathcal F'$ are at most one-dimensional, the points $q\in \PP^4$ such that $\Gamma_q(X)= D\cup D'$ are $\infty^3$-many;
this is a contradiction as $q$ was general in $\PP^4$. 

We show that for a general hyperplane section $C:= X\cap H$ of $X$ there are exactly four points $o_1,o_2,o_3,o_4\in H$ such that $C = \Gamma _{o_i}(X)$. 
Since $C$ is a linearly normal elliptic curve, the following facts are well-known (see e.g. Case (2) in the proof of \cite[Theorem 1]{piene}):
\begin{enumerate}
\item[(i)] $C$ is contained in exactly $4$ quadric cones and their vertices $o_1,o_2,o_3,o_4$ are distinct;
\item[(ii)] $r_C(o_i) =2$ for all $i$;
\item[(iii)] $o_1,o_2,o_3,o_4$ are the only points inside $p\in H$ such that $r_C(p)=2$ {\it and} such that $\Ss (C,p)$ is infinite. These are equivalent to $r_C(p)=2$ and $\Gamma _p({C}) = C$; the latter equality holds because $C$ is an irreducible curve. 
\end{enumerate}

Since $o_i\in H$ and $o_i\notin C = X\cap H$, we have $r_X(o_i) > 1$. Since $r_C(o_i) =2$ and $\Gamma _{o_i}({C}) =C$,
we get $r_X(o_i) =2$ and $\Gamma _{o_i}(X)\supseteq C$. Since $\Gamma _{o_i}(X)$ is a degree $4$ curve, one has $\Gamma _{o_i}(X)=C$. 

For a general $q\in \PP^4$, we showed above that $\Gamma_q(X)$ is an irreducible curve of degree four. Note that $\Gamma_q(X)$ cannot be
a rational normal curve because for every hyperplane $H$ containing $q$, $q$ is spanned by four points on $C = X\cap H$ which are coplanar. 
The curve $\Gamma_q(X)$ cannot be a plane curve of degree $d=4$ either, by the very same argument in the proof of Theorem \ref{x5}(iv). Thus $\langle \Gamma_q(X)\rangle$ is a hyperplane, for $q\in \Uu$, where the latter is an open set in $\PP^4$ where $\Gamma_q(X)$ have the same degree and arithmetic genus for each $q$ as in Remark \ref{existenceofopen}. Define the following
morphism: 
\[
\psi: \Uu \longrightarrow \PP^{4\vee},
\]
\[
q\longmapsto H_q:=\langle \Gamma_q(X)\rangle. 
\]
Since for any integral curve $Y\subset \PP^3$, its Segre set $\Sigma(Y)$ is finite, it follows that this map is finite. Thus it is dominant by dimensional count. 
Hence the general hyperplane section is of the form $C=X\cap H_q = \Gamma_q(X)$. Moreover, property (iii) of the linearly normal elliptic curve $C$ stated above yields $\deg(\psi) = 4$ and so $\sharp(\Sigma(\Gamma_q(X))) = 4$.  
\end{proof}
\end{theorem}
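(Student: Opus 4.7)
The plan is to combine the known classification of these surfaces with the bookkeeping of $\deg \Gamma_q(X)$. By \cite{bp} and \cite[8.11 and 8.12]{fuji}, $X$ is (up to projection) the complete intersection of two quadrics in $\PP^4$ and the general hyperplane section $C = X \cap H$ is a linearly normal elliptic quartic of $\PP^3 \cong H$; Proposition \ref{degentryloc} then gives $\deg \Gamma_q(X) = 4$. For case (i), when $X$ is a cone with vertex $v$, I apply Corollary \ref{uuuu1}(iii) directly to obtain that $\Gamma_q(X)$ is the union of $4$ lines through $v$. To see that no three are coplanar, I pick a hyperplane $H\subset \PP^4$ with $v\notin H$ and set $q' = \langle v,q\rangle \cap H$; the four intersection points $\Gamma_q(X)\cap H \subset C$ split as the two distinct pairs in $\Ss(C,q')$, supported on two distinct secants of $C$ through $q'$, so no three of those four points are collinear, equivalently no three of the four lines of $\Gamma_q(X)$ are coplanar.

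In case (ii), the heart of the argument is to identify $\Gamma_q(X)$ with a general hyperplane section. I would first show that $X$ contains only finitely many lines: since $X$ is a complete intersection of two quadrics, $\omega_X \cong \Oo_X(-1)$, and a $1$-parameter family of lines through $X_{\reg}$ would yield a general $L$ with $L^2\geq 0$, contradicting the adjunction computation $-2 = L^2 + \omega_X\cdot L = L^2 - 1$. Because $\Gamma_q(X)$ has degree $4$ and cannot have a line component, it is either irreducible or a union of two (irreducible) conics $D\cup D'$. The two-conic case I would exclude dimensionally: conics on $X$ sweep out at most $1$-dimensional families, so there are only finitely many pairs of families to consider; for each fixed pair $(D,D')$, Lemma \ref{y9} kills the case $\langle D\cup D'\rangle = \PP^4$, while Lemma \ref{y5} bounds $\mathfrak{S}(D,D')$ to dimension at most $1$ when $\langle D\cup D'\rangle = \PP^3$. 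Summing over the finite set of families, the locus $\{q : \Gamma_q(X) = D\cup D'\}$ cannot be $4$-dimensional, contradicting the genericity of $q\in \PP^4$, so $\Gamma_q(X)$ is irreducible of degree $4$.

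To close case (ii), I next rule out that $\Gamma_q(X)$ is planar or that it spans $\PP^4$. The planar case is excluded exactly as in Theorem \ref{x5}(iv), where the Trisecant Lemma would force $\deg \Gamma_q(X) = 2$. A rational normal quartic in $\PP^4$ is also excluded: for any hyperplane $H$ through $q$, the four points of $\Gamma_q(X)\cap H$ lie on the two secants of $C_H = X\cap H$ through $q$ and thus span at most a plane in $H$, whereas four general points of a rational normal quartic span a $\PP^3$. Hence $\langle \Gamma_q(X)\rangle$ is a hyperplane $H_q$, and the map $\psi \colon q\mapsto H_q$ from the open set $\Uu$ of Remark \ref{existenceofopen} to $(\PP^4)^\vee$ is finite (the Segre set of an irreducible space curve being finite, by \S\ref{segrepoints}) and therefore dominant, giving $\Gamma_q(X) = X\cap H_q$ for $q\in \Uu$. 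Finally I would invoke the classical fact (e.g.\ Case (2) in the proof of \cite[Theorem 1]{piene}) that a linearly normal elliptic quartic $C\subset \PP^3$ lies on exactly four quadric cones with distinct vertices $o_1,\dots,o_4$ satisfying $r_C(o_i) = 2$ and $\Gamma_{o_i}(C) = C$; lifting via $2 \leq r_X(o_i) \leq r_C(o_i) = 2$ and $\Gamma_{o_i}(X) \supseteq C$ (forcing equality by the degree count $\deg \Gamma_{o_i}(X) = 4$) yields $\deg \psi = 4$ and hence $\sharp \mathfrak{S}(\Gamma_q(X)) = 4$.

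The main obstacle will be the two-conic elimination in case (ii): the $1$-dimensionality of each family of conics on $X$ is only controlled a priori by adjunction, and it must be married to the Segre-locus dimension estimates of \S\ref{segrepoints} in a careful sweep-out count to produce a genuine contradiction with the genericity of $q \in \PP^4$. The identification $\Gamma_q(X) = X \cap H_q$ and the computation $\deg \psi = 4$ likewise hinge delicately on the classical projective geometry of linearly normal elliptic quartics and will require the most careful bookkeeping.
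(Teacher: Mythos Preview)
Your proposal is correct and follows essentially the same approach as the paper's proof: the same classification input, the same adjunction argument to exclude infinitely many lines, the same Lemma~\ref{y9}/Lemma~\ref{y5} dimension count to eliminate the two-conic case, the same exclusion of planar and rational normal quartic entry loci, and the same $\psi$-map argument combined with the four quadric cones of the elliptic quartic. Your use of Corollary~\ref{uuuu1}(iii) in case~(i) is a clean shortcut over the paper's direct computation, and you supply the ``no three coplanar'' justification that the paper leaves implicit.
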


\section{A large class of varieties}\label{afamily}

Let $n\ge 2$ and $s\ge 3$. We introduce the following class of projective varieties: 

\begin{definition}\label{largeclass}
Let $r = s(n+1)-2$. Let $\Ff (n,s)$ be the set of all integral and non-degenerate $n$-dimensional varieties $X\subset \PP^r$ such that $\sigma _s(X) =\PP^r$ and $X$ has {\it type} {\bf I}. Note that $r_{\gen}(X) = s$. By assumptions on $r$ and $X$, Lemma \ref{dimentrylocus} implies that $\Gamma _q(X)$ is an irreducible curve.  
\end{definition}

Let $X\in \Ff (n,s)$. Since $\Gamma_q(X)$ is non-defective, $2s-2 \le \ell (X)\le 2s-1$. Theorem \ref{x5}(ii) shows that $\ell (X)=2s-2$ if and only if $X$ is of {\it type} {\bf IA}. 

The next results explain why $\Ff (n,s)$ is a large class of varieties. We start with an auxiliary lemma: 

\begin{lemma}\label{uuu2}
Fix a an integral projective variety $W$ of dimension $n\geq 2$, and a very ample line bundle $L$ on it. 
Assume $h^1(L^{\otimes t})=0$ for all $t\ge 1$. Fix integers $k$ and $m$ such that $k\ge 2m >0$. 
Let $j: W\to \PP^N$ be the embedding defined by the linear system $|L^{\otimes k}|$ and set $Y=j(W)$. Then $\dim \sigma _m(Y) = m(n+1)-1$ and $\sharp (\Ss (Y,q)) =1$ for all $q\in \PP^r$ such that $r_X(q)\le m$. \end{lemma}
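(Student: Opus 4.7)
The lemma packages two statements: $\sigma_m(Y)$ has the expected dimension $m(n+1)-1$, and every $q\in\PP^r$ with $r_Y(q)\le m$ admits a unique irredundant $Y$-rank decomposition. My plan is to derive both from a single input, namely that configurations of at most $2m$ distinct points on $W$, and the first infinitesimal neighborhood of $m$ distinct points, impose independent conditions on the complete linear system $|L^{\otimes k}|$. Very ampleness of $L$ produces the required sections by explicit products, and the hypothesis $k\ge 2m$ leaves just enough room to multiply in all needed factors.

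The key technical step I will use is a product-of-sections construction. Given $\ell\le 2m$ distinct points $p_1,\dots,p_\ell\in W$, very ampleness of $L$ provides, for each pair $i\ne j$, a section $s_{ij}\in H^0(L)$ with $s_{ij}(p_j)=0$ and $s_{ij}(p_i)\ne 0$. Then $\sigma_i:=\prod_{j\ne i}s_{ij}\in H^0(L^{\otimes \ell-1})$ vanishes at every $p_j$ with $j\ne i$ and is nonzero at $p_i$. Since $k-\ell+1\ge 1$ and $L^{\otimes k-\ell+1}$ is globally generated, multiplying $\sigma_i$ by a section not vanishing at any $p_j$ yields a section of $L^{\otimes k}$ isolating $p_i$, so the $\ell$ points impose independent conditions on $|L^{\otimes k}|$ and are linearly independent in $\PP^N$. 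For the infinitesimal analogue I would replace each factor $s_{ij}$ by $(s_{ij})^2$, forming $\tau_i:=\prod_{j\ne i}(s_{ij})^2\in H^0(L^{\otimes 2(m-1)})$, which vanishes to order at least $2$ at every $p_j$, $j\ne i$, and is nonzero at $p_i$. Since $k-2(m-1)\ge 2$ and $L^{\otimes k-2(m-1)}$ is very ample, multiplying $\tau_i$ by sections with prescribed value and $1$-jet at $p_i$ realizes, via the Leibniz rule, arbitrary $1$-jets of the product at $p_i$ with trivial $1$-jet at every other $p_j$. This shows that $Z:=2p_1+\cdots+2p_m$ imposes its full $m(n+1)$ conditions on $|L^{\otimes k}|$.

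From here both conclusions follow. By Terracini's lemma the tangent space to $\sigma_m(Y)$ at a general point equals $\langle T_{p_1}Y,\dots,T_{p_m}Y\rangle$ for generic $p_i\in W$; its codimension in $\PP^N$ is $h^0(L^{\otimes k}\otimes\mathcal{I}_Z)$. Combining the infinitesimal construction above with the hypothesis $h^1(L^{\otimes k})=0$ (which makes the long exact sequence computation clean) gives this equal to $N+1-m(n+1)$, so $\dim\sigma_m(Y)=m(n+1)-1$. For identifiability, suppose $q\in\PP^r$ has $r_Y(q)=t\le m$ with two irredundant decompositions $S,S'\subset Y$. Then $|S\cup S'|\le 2t\le 2m$, so by the first half of the construction $S\cup S'$ is linearly independent in $\PP^N$; standard linear algebra then gives $\langle S\rangle\cap\langle S'\rangle=\langle S\cap S'\rangle$, whence $q\in\langle S\cap S'\rangle$. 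The irredundancy of $S$ and $S'$ now forces $S\cap S'=S=S'$.

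The main technical obstacle I foresee is the infinitesimal half of the construction: one must verify that $\tau_i\cdot s$ realizes any prescribed $1$-jet at $p_i$ while keeping the $1$-jet at every other $p_j$ trivial. The second property is automatic from the order of vanishing of $\tau_i$, while the first reduces to a Leibniz computation that succeeds precisely because $\tau_i(p_i)\ne 0$ and $L^{\otimes k-2(m-1)}$ is very ample. This is where the bound $k\ge 2m$ is used at its sharpest.
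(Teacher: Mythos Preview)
Your argument is correct but follows a genuinely different route from the paper's. The paper proves a single cohomological claim by induction on $i$: for any $S\subset W$ with $\sharp(S)=i$ one has $h^1(\Ii_S\otimes L^{\otimes i})=0$, using the exact sequence obtained by peeling off one point via a divisor in $|L|$ through it. Applying this with $i\le 2m\le k$ gives linear independence of any $2m$ points of $Y$, whence uniqueness of decompositions; the paper then deduces $\dim\sigma_m(Y)=m(n+1)-1$ directly from uniqueness (the abstract $m$-secant maps generically injectively to $\PP^N$). Your approach instead builds the required sections by explicit products $\prod_{j\ne i}s_{ij}$ and $\prod_{j\ne i}s_{ij}^2$, and handles the secant dimension separately via Terracini. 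Two remarks on the trade-offs: first, your product construction for linear independence is more elementary than the paper's induction and, notably, never invokes the hypothesis $h^1(L^{\otimes t})=0$; your parenthetical that $h^1(L^{\otimes k})=0$ ``makes the long exact sequence computation clean'' is in fact unnecessary, since your construction already gives surjectivity of $H^0(L^{\otimes k})\to H^0(L^{\otimes k}|_Z)$ directly. Second, the paper's path to the secant dimension is shorter: once uniqueness is known, nondefectivity is immediate, so your Terracini/infinitesimal step, while correct (very ampleness of $L^{\otimes(k-2m+2)}$ does give surjection onto the full $(n{+}1)$-dimensional $1$-jet space at smooth points, and general points are smooth), is extra work you could avoid.
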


\begin{proof}
We first prove the following claim.

\begin{quote}
\indent {\it Claim :} For each integer $i\ge 1$ and each $S\subset W$ such that $\sharp (S) =i$ we have $h^1(\Ii _S\otimes L^{\otimes i}) =0$ and $h^0(\Ii _S\otimes L^{\otimes i})= h^0(L^{\otimes i})-i$. \\

\noindent {\it Proof of the Claim:} If $i =1$, the statement holds, because $L$ is globally generated (equivalently, $L$ does not have
base points) and $h^1(L)=0$. Similarly, note that, for every $i\geq 2$, $L^{\otimes i}$ is globally generated and $h^1(L^{\otimes i})=0$ implies $h^1(\Ii _p\otimes L^{i})=0$,
for every $p\in W$.  

Assume $i\ge 2$. Let $S\subseteq W$ such that $\sharp (S) =i$. Fix $p\in S$ and set $A = S\setminus \lbrace p\rbrace$. Since $L$ is very ample and $A$ is finite, there exists a section $H\in |L|$ such that $H(p) = 0$ and {\it not} vanishing on any of the points in $A$. Let $D$ be the support of $H$. Thus we have the exact sequence: 
\begin{equation}\label{equuu2}
0 \to \Ii _A\otimes L^{\otimes (i-1)} \to \Ii _S\otimes L^{\otimes i} \to \Ii _{p,D}\otimes (L_{|D})^{\otimes i} \to 0,
\end{equation}
where $L_{|D}$ is $L$ restricted on $D$. By induction, $h^1(\Ii _A\otimes L^{\otimes (i-1)})=0$. Since $L_{|D}^{\otimes i}$ is very ample, one has $h^1(\Ii _{p,D}\otimes (L_{|D})^{\otimes i})=0$ as before. This proves the {\it Claim}.  
\end{quote}

Since $k\ge 2m$ and $h^1(L^{\otimes k})=0$, the {\it Claim} implies $h^1(\Ii _S\otimes L^{\otimes k})=0$ for each set $S\subset W$ such that $\sharp (S)\le 2m$,
i.e. any subset of $Y$ with cardinality at most $2m$ is linearly independent in $\PP^N$. Thus $\sharp (\Ss (Y,q)) =1$ for all $q\in \PP^r$ such that $r_X(q)\le m$.
This implies that the first $m$ secant varieties of $Y$ have expected dimension $\dim \sigma _m(Y) =m(n+1)-1$. \end{proof}

We are now ready to prove the following: 

\begin{theorem}\label{u1}
Fix an integer $s\ge 3$. Let $W$ be a smooth and irreducible projective variety. Then there exists $X\in \Ff (n,s)$ such that $X\cong W$.
\end{theorem}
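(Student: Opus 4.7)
The plan is to construct $X$ as a generic two-step projection of a sufficiently positive embedding of $W$, using Lemma \ref{uuu2} to arrange the non-defectivity of $\sigma_s$ and Proposition \ref{05} to force the entry locus to be irreducible.

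First I would fix a very ample line bundle $L$ on $W$, replacing it by a high enough power so that Serre vanishing gives $h^1(L^{\otimes t})=0$ for every $t\ge 1$, and choose an integer $k\ge 2s$ large enough that $h^0(L^{\otimes k})\ge s(n+1)$ (which holds automatically once $k$ is large, since $h^0(L^{\otimes k})$ is a polynomial of degree $n$ in $k$). Let $j\colon W\hookrightarrow \PP^N$ denote the embedding defined by $|L^{\otimes k}|$ and set $Y:=j(W)$. Lemma \ref{uuu2}, applied with $m=s$, gives $\dim \sigma_s(Y) = s(n+1)-1$.

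Next I would project $Y$ from a general linear subspace $V_1\subset \PP^N$ of dimension $N - s(n+1)$, obtaining $Y'\subset \PP^{s(n+1)-1}$. Three claims have to be verified, each a dimension count that uses $s\ge 3$ and $n\ge 2$. Generic $V_1$ avoids the tangent variety $\tau(Y)$, because $\dim V_1 + \dim \tau(Y)\le N - s(n+1) + 2n < N$, so $Y'\cong Y\cong W$. Generic $V_1$ is disjoint from $\sigma_s(Y)$, because $\dim V_1 + \dim \sigma_s(Y) = N-1 < N$; thus $\pi_{V_1}|_{\sigma_s(Y)}$ is a finite morphism whose source and target both have dimension $s(n+1)-1$, so $\pi_{V_1}(\sigma_s(Y)) = \PP^{s(n+1)-1}$ and therefore $\sigma_s(Y') = \PP^{s(n+1)-1}$. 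Finally, $Y'$ is non-degenerate because a hyperplane containing $Y'$ would pull back to a hyperplane containing the non-degenerate $Y$, which is impossible.

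At this point I would invoke Proposition \ref{05} with $X' = Y'$, $r = s(n+1)-2$, and $m=1$: the hypotheses $\sigma_s(Y') = \PP^{r+1}$ and $\op{codim}(Y') = (s-1)(n+1)\ge 1$ are both in hand. Projecting $Y'$ from a general point $v\in \PP^{s(n+1)-1}$ produces $X\subset \PP^{s(n+1)-2}$ with $\Gamma_q(X)$ irreducible for general $q$; so $X$ has {\it type} {\bf I}. Since $s\ge 3$, one has $\dim \sigma_2(Y')\le 2n+1 < s(n+1)-1$, so a general $v$ avoids $\sigma_2(Y')$ and the projection is an isomorphism on $Y'$, giving $X\cong W$. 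The same pull-back argument yields non-degeneracy of $X$, while $\sigma_s(X) = \pi_v(\sigma_s(Y')\setminus\{v\}) = \PP^{s(n+1)-2}$. Hence $X\in \Ff(n,s)$, as required. The main conceptual obstacle is that a single generic projection does not suffice: Proposition \ref{05} demands that $\sigma_s$ already fill the ambient projective space of $X'$, which forces splitting the projection in two stages, the first engineered purely so that $\sigma_s(Y')$ fills $\PP^{s(n+1)-1}$ and the second so that Proposition \ref{05} applies.
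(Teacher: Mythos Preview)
Your argument is correct apart from one slip: disjointness of $V_1$ from $\tau(Y)$ only makes $\pi_{V_1}|_Y$ an immersion, not an embedding. For $Y'\cong Y$ you need $V_1\cap\sigma_2(Y)=\emptyset$; the same dimension count goes through, since $\dim V_1+\dim\sigma_2(Y)\le (N-s(n+1))+(2n+1)<N$ whenever $s\ge 2$.

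Your route differs from the paper's. The paper performs a \emph{single} projection of $Y$ from a general linear space $V\subset\PP^N$ of dimension $N-r-1$ and proves irreducibility of $\Gamma_q(X)$ directly, without invoking Proposition~\ref{05}. It exploits the identifiability furnished by Lemma~\ref{uuu2}: for general $p\in\sigma_s(Y)$ one has $\Ss(Y,p)=\{S_p\}$, so setting $U=\langle V\cup\{p\}\rangle$ (a general $(N-r)$-plane) and $D=U\cap\sigma_s(Y)$, Bertini makes $D$ an integral curve, and $\Gamma_q(X)$ is then the closure of $\bigcup_{z\in D}\pi_V(S_z)$, the image of an irreducible family. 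So a single generic projection \emph{does} suffice, contrary to your closing remark; the two-stage split is forced on you only because you route through Proposition~\ref{05}, whose hypothesis requires $\sigma_s$ to already fill the intermediate ambient space. Your version has the virtue of reusing Proposition~\ref{05} as a black box; the paper's buys a one-step construction at the cost of essentially reproving that proposition inline, with identifiability standing in for the abstract-join Bertini argument of Theorem~\ref{j05}.
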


\begin{proof}
Fix a very ample line bundle $L$ on $W$ such that $h^i(L^{\otimes t})=0$ for all $i\geq 1$ and all $t\geq 1$. Let $j: W\to \PP^N$ be the embedding induced by the complete linear system $|L^{\otimes k}|$ for some $k\ge 2s$. Let $Y = j(W)$, and $r = s(n+1)-2$. 

By Lemma \ref{uuu2}, the first $s$ secants of $Y$ have the expected dimension. Note that $\dim \sigma_s(Y) = s(n+1) -1 = r+1$. 

Lemma \ref{uuu2} shows that for every $p\in \PP^N$ such that $r_Y(p) \leq s $, one has $\sharp (\Ss (Y,p))=1$. This implies $N\geq \dim \sigma_s(Y) = r+1$. 
Indeed, otherwise, $\sigma_s(Y)$ would have already filled up the ambient space $\PP^N$ with the constructible set $\Ss (Y,q)$ being positive dimensional for a general $q\in \PP^N$, which contradicts Lemma \ref{uuu2}. 

Let $V\subset \PP^N$ be a general linear subspace with $\dim V = N-r-1$ and let $X = \pi_V(Y)\subset \PP^r$ be the projection of $Y$ from $V$. The variety $X$ is isomorphic to $Y$, as $V\cap \sigma_2(Y)=\emptyset$, by the generality of $V$. Hence $X\cong W$. Notice that the general point $p\in \PP^r$ is the projection of a general point $q\in \sigma_s(Y)$. Indeed, the projection $\pi_V$ induces a rational dominant map from $\sigma_s(Y)$ to $\PP^r$.

Now we prove that $X$ has {\it type} {\bf I}. Note that $r_{\gen}(X) = s$. By Lemma \ref{dimentrylocus}, $\gamma(X) = \dim \Gamma _q(X)=s(n+1)-r-1 = 1$. Fix a general $q\in \PP^r$. So there is $p\in \sigma _s(Y)$ such that $q =\pi _V(p)$. Since $q$ is general in $\PP^r$, $p$ is general in $\sigma _s(Y)$ and in particular $r_Y(p)=s$. 
We have $\sharp (\Ss (Y,p)) =1$, say $\Ss (Y,p) = \{S_{p}\}$ with $\sharp (S_{p}) =s$. Let $U = \langle \{p\}\cup V\rangle$. Since $V$ and $p$ are general, $U$ is a general linear subspace of $\PP^N$ of dimension $N-r$. Since $U$ is general, $D = U\cap \sigma _s(Y)$ is an integral curve and $U\cap \sigma _{s-1}(Y) =\emptyset$. The general 
point $z\in D$ satisfies $r_Y(z)=s$ and $\sharp (\Ss (Y,z)) =1$. Denote $D^{\circ}$ an open set of points $z\in D$ with the above properties. Therefore, the set $\Gamma _q(X)$ is the closure of the projection of the constructible curve $\{\pi_V(S_z)\}_{z\in D^{\circ}}$, which is irreducible. 
 \end{proof}

\begin{definition}
Let $\Ff '(n,s)$ (resp. $\Ff''(n,s)$) denote the set of all $X\in \Ff (n,s)$ of {\it type} {\bf IA} (resp. {\it type} {\bf IB}).
\end{definition} 
 
\begin{remark}
Fix $X\in \Ff '(n,s)$ and let $p_g(X)$ (resp. $p_a(X)$, $d_X$) be the geometric genus (resp. arithmetic genus, degree) of the entry locus $\Gamma _q(X)$. 

Let $X\in \Ff (n,s)$ and $\Gamma _q(X)$ be the corresponding general entry locus. The latter passes through $s$ general points of $X$. 
Thus if $X\in \Ff '(n,s)$ there are strong lower bounds for $p_g(X)$ (e.g. $X$ may contain no curve with small geometric genus) and strong lower bounds on $d_X$ (e.g. when $X$ is embedded by a very positive line bundle it does not contain low degree curves).
 \end{remark}

We conclude this section with two natural questions: 

\begin{question}\label{q2}
Fix $X\in \Ff (n,s)$. Is $X\in \Ff '(n,s)$? Assume $X\in \Ff '(n,s)$ and smooth. Is a general entry locus of $X$ a smooth curve?
\end{question}

\begin{question}\label{q3}
What is the minimum of all $p_g(X)$ (resp. $p_a(X)$, $d_X$) among all $X\in \Ff '(n,s)$ and among all smooth $X\in \Ff '(n,s)$? 
\end{question}

\section{Tangentially degenerate curves}\label{tangdeg}

Let $r\ge 3$, and $X\subset \PP^r$ be an integral and non-degenerate curve. We say that $X$ is {\it tangentially degenerate} if a general tangent line $L= T_pX$ of $X$, where $p\in X_{\reg}$, meets $X\setminus \{p\}$. In positive characteristic, many tangentially degenerate smooth curves are known \cite{k1, k2}. In characteristic zero, no tangentially degenerate curve is known. In characteristic zero, neither smooth nor nodal curves are tangentially degenerate \cite[Theorem 3.1 and Remark 3.8]{k1}; moreover, other important classes of singular curves are not tangentially degenerate \cite{bp}. Let $(\dagger)$ denote the following condition: 
\vspace{0.3cm}
\begin{center}
$(\dagger)$ {\it In characteristic zero, there is no tangentially degenerate curve}. 
\end{center}

\begin{remark}
Condition $(\dagger)$ is \cite[Conjecture 1.6]{bp}. 
\end{remark}

\begin{theorem}\label{v1}
Assume $(\dagger)$. Fix integers $n\ge 2$ and $s\ge 3$. Let $r = s(n+1)-2$. Let $X\subset \PP^r$ be an integral and non-defective $n$-dimensional variety. Assume $\sigma _s(X) =\PP^r$
and that $X$ is of {\it type} {\bf IA}. Then $\deg (\Gamma _q(X)) = 2s-2$, $\ell (X) =2s-2$ and $\Gamma _q(X)$ is a rational normal curve of $\langle \Gamma _q(X)\rangle$.
\end{theorem}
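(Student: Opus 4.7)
The proof divides into three steps: showing $\Gamma_q(X)$ is a curve, computing its linear span, and identifying it as a rational normal curve of the stated degree. The first two are essentially direct applications of earlier results of the paper, while the third is the main work, and is where $(\dagger)$ enters.

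First, I would compute $\gamma(X) = \dim\Gamma_q(X)$ via Lemma \ref{dimentrylocus}. Since $X$ is non-defective and $\sigma_s(X) = \PP^r$ with $r = s(n+1)-2$, one has $\dim\sigma_{s-1}(X) = (s-1)(n+1) - 1$, and substituting yields $\gamma(X) = 1$. Set $C := \Gamma_q(X)$. For the span, I would invoke Theorem \ref{x5}(ii): its hypotheses require $X$ to be of type \textbf{I} (immediate from \textbf{IA}) and all secants of $C$ to have the expected dimension (automatic, since any integral non-degenerate curve is non-defective). The equivalence then gives $\ell(X) = (\gamma(X)+1)\,r_{\gen}(X) - \gamma(X) - 1 = 2s - 2$, so $C$ is non-degenerate in $\PP^{2s-2}$.

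It remains to prove $\deg C = 2s-2$; this, combined with non-degeneracy in $\PP^{2s-2}$, is the classical characterization of rational normal curves. By Lemma \ref{typeAgen}, $r_{\gen}(C) = s$ and $\Gamma_o(C) = C$ for a general $o \in \langle C\rangle$, so the family $\Ss(C, o)$ is an irreducible $1$-parameter family of irredundant $s$-tuples on $C$ whose union sweeps all of $C$. Terracini's lemma applied to $\sigma_s(C) = \PP^{2s-2}$ then shows that, for a general decomposition $(p_1, \ldots, p_s)$ of a general $o \in \langle C\rangle$, the $s$ tangent lines $T_{p_1}C, \ldots, T_{p_s}C$ jointly span $\PP^{2s-2}$ and obey exactly one (up to scalar) linear dependence, which encodes the first-order deformation of $(p_1,\ldots,p_s)$ along $\Ss(C, o)$.

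The hard part is to combine these Terracini tangent constraints with the global sweep property of $\Ss(C, o)$ and $(\dagger)$ to force $\deg C = 2s - 2$. My plan is to argue by contradiction: if $\deg C \geq 2s - 1$, then following the infinitesimal Terracini direction along the $1$-parameter family $\Ss(C, o)$ (which covers $C$) should produce an extra second-order incidence, forcing a general tangent line $T_pC$ to meet $C$ at a point other than $p$---contradicting the tangential non-degeneracy guaranteed by $(\dagger)$. With $\deg C = 2s-2$ in hand, $C$ is a non-degenerate integral curve of minimum degree in $\PP^{2s-2}$, hence a rational normal curve, completing the proof. This strategy strictly refines the $s = 3$ argument of Theorem \ref{d2}, where only the rationality of $C$ is obtained (without appealing to $(\dagger)$).
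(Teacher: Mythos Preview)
Your first two steps are correct and match the paper: Lemma~\ref{dimentrylocus} gives $\gamma(X)=1$, and Theorem~\ref{x5}(ii) gives $\ell(X)=2s-2$, so $C:=\Gamma_q(X)$ is an irreducible non-degenerate curve in $\PP^{2s-2}$ and it suffices to show $\deg C = 2s-2$.

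The gap is in your degree computation. Your proposed contradiction---that $\deg C \ge 2s-1$ would, via some unspecified ``second-order incidence'' along the one-parameter family $\Ss(C,o)$, force a general tangent of $C$ to meet $C$ again---is not an argument but a hope. You give no mechanism linking the degree excess to a trisecant tangent, and working purely inside $C\subset\PP^{2s-2}$ it is unclear how to produce one. The paper's proof is structurally different and uses two ingredients you are missing.

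First, the role of $(\dagger)$ is packaged into a concrete projection lemma (Lemma~\ref{v0}): under $(\dagger)$, projection of an integral non-degenerate curve from a \emph{general tangent line} is birational onto its image and drops the degree by exactly $2$. One picks $s-2$ general points of $C$ (coming from a general $S\in\Ss(X,q)$), lets $W$ be the span of the corresponding tangent lines of $C$, and applies Lemma~\ref{v0} iteratively: $\pi_W$ restricted to $C$ is birational onto a plane curve $D$ with $\deg D = \deg C - 2(s-2)$.

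Second---and this is the step that cannot be carried out inside $C$ alone---one simultaneously projects the ambient variety $X$ from $V=\langle\cup_{p\in A}T_pX\rangle$ to obtain $X'\subset\PP^{2n}$. Because $X$ is of type \textbf{IA}, the curve $D$ is the entry locus $\Gamma_z(X')$ for a general $z\in\PP^{2n}$, and the two remaining points $\pi_V(E)$ are a \emph{general} pair on $X'$. The Trisecant lemma applied to $X'$ (not to $C$ or $D$) then forces $\langle\pi_V(E)\rangle\cap X'$ to be a reduced scheme of length $2$; since this scheme contains $\langle\pi_V(E)\rangle\cap D$, one gets $\deg D = 2$, hence $\deg C = 2 + 2(s-2) = 2s-2$. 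Your sketch never invokes the ambient $X$ after establishing $\ell(X)$, and without that Trisecant step on $X'$ there is no visible way to pin down $\deg D$.
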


Let $X$ be as in the assumption of Theorem \ref{v1}. Assuming $(\dagger)$, this result implies that $X$ is rationally connected and for any $s$ general points of $X$ there passes a rational curve of degree $2s-2$.

\begin{lemma}\label{v0}
Assume $(\dagger)$. Let $X\subset \PP^r$, $r\ge 4$, be an integral and non-degenerate curve. Fix a general tangent line $L$ of $X$  and let $\pi _L: \PP^r\setminus L\to \PP^{r-2}$ be the linear
projection from $L$. Let $f: X\to \PP^{r-2}$ be the morphism induced by $\pi _{L|X\setminus X\cap L}$. Then $f$ is birational onto its image and $\deg (f(X)) =\deg (X)-2$.
\end{lemma}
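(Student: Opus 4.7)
The plan is in two parts. First, I would compute the product $\deg(f)\cdot\deg(f(X))$ via a hyperplane pullback, using $(\dagger)$ to pin down the base locus. Second, I would show $\deg(f)=1$ by factoring the projection $\pi_L$ into two projections from points, each of which is birational by the standard trisecant-lemma argument available in characteristic zero.

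For the degree computation, assumption $(\dagger)$ implies that, for general $p\in X_{\reg}$, the tangent line $L=T_pX$ meets $X$ set-theoretically only at $p$. Because $p$ is a general smooth point of a non-degenerate integral curve in characteristic zero, it is not an inflection point, so the intersection of $L$ and $X$ at $p$ has multiplicity exactly $2$; hence $L\cap X=2p$ as a divisor on $X$. A general hyperplane $H\subset\PP^{r-2}$ pulls back under $\pi_L$ to a hyperplane $\widetilde H\subset\PP^r$ containing $L$, and the scheme-theoretic intersection $\widetilde H\cap X$ is a divisor of degree $\deg(X)$ on $X$. Subtracting the base divisor $L\cap X=2p$, the residual divisor equals $f^{*}H$ and has degree $\deg(X)-2$, giving
\[
\deg(f)\cdot\deg(f(X))=\deg(X)-2.
\]

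To conclude $\deg(f)=1$, I would factor $\pi_L=\pi_{\bar L}\circ\pi_p$, where $\pi_p\colon\PP^r\dashrightarrow\PP^{r-1}$ is the projection from the point $p$ and $\bar L:=\pi_p(L\setminus\{p\})\in\PP^{r-1}$. Since $p$ is a (general) smooth point of $X$ and $r\ge 4$, the trisecant lemma in characteristic zero gives that only finitely many trisecants of $X$ pass through $p$, so $\pi_p|_X\colon X\to X':=\pi_p(X)$ is birational onto an integral non-degenerate curve of degree $\deg(X)-1$ in $\PP^{r-1}$. Condition $(\dagger)$ forces the fibre of $\pi_p|_X$ over $\bar L$ to consist only of the point $p$, so the only branch of $X$ whose image passes through $\bar L$ is the smooth branch at $p$, making $\bar L$ a smooth point of $X'$. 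As $r-1\ge 3$, the trisecant-lemma argument applies once more, this time at $\bar L$ on $X'$, and $\pi_{\bar L}|_{X'}\colon X'\to f(X)\subset\PP^{r-2}$ is birational. Composing the two birational morphisms, $f$ is birational; together with the displayed identity this gives $\deg(f(X))=\deg(X)-2$.

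The main obstacle is the birationality step: the degree identity is routine, but excluding $\deg(f)\ge 2$ uses the trisecant lemma twice, and both applications rely essentially on characteristic zero (in positive characteristic, strange tangent–secant behavior can occur). The hypothesis $(\dagger)$ enters twice in a crucial way—first to make the base locus of the projection from $L$ on $X$ exactly $2p$, and then to guarantee that $\bar L$ is a smooth point of $X'$, so that the second projection is again birational by the trisecant lemma.
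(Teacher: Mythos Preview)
Your degree computation and the first projection from $p$ are fine. The gap is in the second step: you assert that ``the trisecant-lemma argument applies once more, this time at $\bar L$ on $X'$'', but the trisecant lemma only says that a \emph{general} secant of $X'$ is not a trisecant, hence that projection from a \emph{general} point of $X'$ is birational. The point $\bar L$ is not general on $X'$: it is the specific image of the tangent direction at $p$, and since $X'$ itself varies with $p$, there is no evident reason $\bar L$ should avoid the finitely many ``bad'' inner projection centers of $X'$. In fact the implicit claim fails already for explicit curves: the rational quintic $X'=\{[s^4t:s^2t^3:t^5:s^5]\}\subset\PP^3$ is non-degenerate, not tangentially degenerate, and smooth at $v=[0{:}0{:}0{:}1]$, yet the projection from $v$ is the $2{:}1$ map $[s:t]\mapsto[s^4:s^2t^2:t^4]$ onto a conic. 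So smoothness of $\bar L$ together with the trisecant lemma is not enough; something further about the pair $(X',\bar L)$ is needed, and your argument does not supply it.

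The paper's proof sidesteps exactly this issue by projecting from a different point. Assuming $f$ is not birational, a general $o\in L$ lies on a genuine secant $M=\langle p_1(o),p_2(o)\rangle$ with $p_1(o),p_2(o)\in X$; since $o$ is general in $L$, the point $p_1(o)$ is a \emph{general} point of $X$, so projection from $p_1(o)$ is birational by the trisecant lemma, and moreover the image $h(p)$ is a general point of the image curve $Y\subset\PP^{r-1}$. The line $\psi(L)=T_{h(p)}Y$ then contains $h(p_2(o))\in Y$, so $Y$ is tangentially degenerate, contradicting $(\dagger)$. Thus $(\dagger)$ is invoked on the \emph{projected} curve to rule out non-birationality, rather than via a second trisecant-lemma step; this is the missing idea in your approach.
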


\begin{proof}
Since $L$ is a general tangent line of $X$, $L$ does not contain singular points of $X$, it is tangent to $X$ at a unique point $p$ with order of contact two, $L= T_pX$.

Since $X\cap L\subset X_{\reg}$ and $\PP^{r-2}$ is a projective variety, $\pi _{L|X\setminus X\cap L}$ extends to a unique morphism $f: X\to \PP^{r-2}$. Assume that $f$ is not birational onto its image, i.e. assume that $L$ meets infinitely many secant lines of $X$ at points outside $X$. 

There are finitely many points $q\in \PP^r\setminus X$ such that $q\in \Sigma(X)$, the Segre set of $X$. Since $L$ is a general tangent, we may assume that $L$ contains no such a point. Thus a general $o\in L$ is contained in at least one line $M$ such that $M$ is not tangent to $X_{\reg}$ and $\sharp (M\cap X_{\reg})\ge 2$. 

Now we prove that for a general $L$ and a general $o\in L$ there is a line $M$, $M$ not tangent to $X_{\reg}$, such that $\sharp (M\cap X_{\reg})= 2$. This is clear if $X_{\reg}$ has only finitely many multisecant lines. Then we may assume that $X_{\reg}$ has infinitely many multisecant lines. By the Trisecant lemma \cite[p. 109]{acgh} the multisecant lines of $X$ are $\infty ^1$-many, i.e. the set $\Phi$ of all multisecant lines is a finite union of irreducible families, say $E_1,\dots ,E_h$, of dimension at most one. For $1\le i\le h$, call $T_i$ the closure in $\PP^r$ of the set $\cup _{R\in E_i} R$. Each $T_i$ is an integral surface ruled by lines. Since we are in characteristic zero, only finitely many lines of each of the rulings of $T_i$ are tangent to $X_{\reg}$. Then a general tangent line to $X_{\reg}$ is not contained in $T_1\cup \cdots \cup T_h$.

In conclusion, $L$ meets only finitely many multisecant lines. So for a general $o\in L$, we have $\deg (M\cap X)=2$ and $M\cap X$ is reduced; let $M\cap X = \{p_1(o),p_2(o)\}$. Since $o$ is general in $L=T_pX$, $p_1(o)$ is a general point of $X$. Let $\psi : \PP^r\setminus  \{p_1(o)\}\to \PP^{r-1}$ be the linear projection from $p_1(o)$. Since $p_1(o)$ is a smooth point of $X$, $\psi _{|X\setminus \{p_1(o)\}}$ extends to a unique morphism $h: X\to \PP^{r-1}$. Set $Y = h(X)$.
\begin{quote}
\indent \emph{Claim :} The morphism $h: X \to \PP^{r-1}$ is birational onto its image.\\

\noindent \emph{Proof of the Claim:} Since we are in characteristic zero, it is sufficient to prove that $h$ is generically injective. Fix a general $z\in X$. Then $z\notin T_{p_1(o)}X$, i.e. $h(p_1(o)) \ne h(z)$. By the Trisecant lemma \cite[p. 109]{acgh}, the line $\langle \{p_1(o),z\}\rangle$ contains no other point of $X$. Thus $h(z)\ne h(b)$ for all $b\in X\setminus \{p_1(o), "z\}$. 
\end{quote}

Since $p_1(o)\in X_{\reg}$,  the {\it Claim} implies $\deg (Y) =\deg (X)-1$. Since $X$ is non-degenerate, $Y$ is non-degenerate. Since $r-1\ge 3$ and we assume $(\dagger)$, the curve $Y'$ is not tangentially degenerate. Since $p$ is general in $X$ and $o$ is general in $L=T_pX$, the pair $(p,p_1(o))$ is general in $X^{\times 2}$. 
Hence $h({p})$ is a general point of $Y$. Note that $p_1(o)\notin L=T_pX$. By assumption, the tangent line $\psi (L) = h(L)$ of $Y$ at $h({p})$ contains the point $h(p_2(o))$ of $Y$. Thus $Y$ is tangentially degenerate, which contradicts $(\dagger)$. 

In conclusion, the map in the statement $f: X\to \PP^{r-2}$ has to be birational onto its image. Moreover, since $\deg (L\cap X)=2$, $\deg (f(X))=\deg (X)-2$.\end{proof}

\begin{proof}[Proof of Theorem \ref{v1}:]
Take a general $q\in \PP^r$. By assumption $r_X(q)=s$ and $\Gamma _q(X)$ is an irreducible curve. Note that Theorem \ref{x5}(ii) implies $\ell(X)=\dim \langle \Gamma _q(X)\rangle = 2s-2 \ge 4$. Thus $\deg (\Gamma _q(X))\ge 2s-2$ and equality holds if and only if $\Gamma _q(X)$ is a rational normal curve of $\langle \Gamma _q(X)\rangle$. 

Since $\Ss (X,q)\subseteq \Gamma _q(X)$ and $q$ is general, we may assume that $\Gamma _q(X)$ contains a general $S\subset X$ such that $\sharp (S)= s$.
Such an $S$ is also a general subset of $\Gamma _q(X)$ with cardinality $s$. Thus $S\subset X_{\reg}\cap \Gamma _q(X)_{\reg}$. Fix $A\subset S$ such that $\sharp (A)=s-2$, so $A\subset X_{\reg}$. 

Now, let 
\[V = \langle \cup _{p\in A} T_pX\rangle \mbox{ and } W = \langle \cup _{p\in A} T_p\Gamma _q(X)\rangle.
\]

Since $\Gamma _q(X)$ contains $s$ general points of $X$ and $X$ is non-defective, Terracini's lemma \cite[Corollary 1.4.2]{r3} gives $\dim V=(s-2)(n+1) -1$ and $\dim W = 2s-5$. Let $X' \subset \PP^{2n}$ be the closure of the image of $X\setminus X\cap V$ through the linear projection $\pi _V: \PP^r\setminus V\to \PP^{2n}$ away from $V$. 
Write $E = S\setminus A$. Since $S$ is general both in $X$ and $\Gamma _q(X)$, we have $V\cap E=\emptyset$, $V\cap \langle E\rangle =\emptyset$. Note that $E\cap V =\emptyset$ implies $\Gamma _q(X)\nsubseteq V$. 

Since $S$ is general in $X$, $\pi _V(E)$ is a general subset of $X'$ with cardinality two; in particular, $\pi _V(E)\subset X'_{\reg}$. Since $\langle \cup _{p\in S} T_pX \rangle =\PP^r$, we have $\langle \cup _{p\in \pi _V(E)} T_pX'\rangle = \PP^{2n}$; one has $r_{\gen}(X')=2$. 
The linear projection $\pi_V$ restricted on $\Gamma_q(X)$ induces the linear projection $\pi_W$ (i.e. the linear projection from $W$) from $\Gamma _q(X)$ onto an irreducible curve $D$ of degree $d>1$. Applying $(s-2)$-many times Lemma \ref{v0}, we see that to prove the statement it is sufficient to prove that $D$ has $d=2$, i.e. it is a conic. 

Let $D^{\circ} = \pi _V\left(\Gamma _q(X)\setminus\left( \Gamma _q(X)\cap V\right)\right)$. Thus $D^{\circ}$ is an irreducible algebraic set of dimension $\le 1$ containing at least two points, i.e. the points in $\pi_V (E)$. The closure $D$ of $D^{\circ}$ is a curve containing $\pi_V (E)$. The following claim finishes off the proof: 
\begin{quote}
\noindent \emph{Claim :} $D$ is a plane curve of degree $d=2$.\\

\noindent \emph{Proof of the Claim :} One has $\dim \langle D\rangle \le 2$. Note that, since $X$ is of {\it type} {\bf IA}, we see that $D = \Gamma _z(X')$ for a general $z\in \PP^{2n}$ ($z$ depends on $E$). In other words, $D$ is an entry locus of $X'$. If $D$ were a line, then every point on its span $\langle D\rangle = D\subset X'$ would have $X'$-rank one, a contradiction. 

So the curve $D$ is not a line and hence $d\geq 2$. Since $S$ is general in $X$, $\pi_V (E)$ is general in $X'$ and spans a general $z\in \PP^{2n}$. The Trisecant lemma \cite[p. 109]{acgh} implies that $\langle \pi_V (E)\rangle \cap X'$ is a reduced degree two scheme. On the other hand, the scheme $\langle \pi_V (E)\rangle \cap X'$ contains the degree $d$ scheme $\langle \pi_V (E)\rangle \cap D$. Thus $d = 2$.
 \end{quote}
\end{proof}

In the next two results we do not need $(\dagger)$. 

\begin{lemma}\label{d1}
Let $X\subset \PP^4$ be an integral and non-degenerate curve. For a \emph{general} $p\in X_{\reg}$, set $L:= T_pX$. Let $\pi : \PP^4\setminus L\to \PP^2$ be the linear projection from $L$ and $f: X\to \PP^2$ the extension of $\pi _{X\setminus X\cap L}: X\setminus X\cap L \to \PP^2$. Assume that $f$ is not birational onto its image. Then $f(X)$ is not a conic. 
\end{lemma}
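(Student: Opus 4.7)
Argue by contradiction: assume $f(X) = C$ is a conic (so $\deg C = 2$) and $\delta := \deg f \geq 2$. The strategy is to combine a global degree count for $X$ via hyperplanes containing $L$ with a local fiber analysis of $f$ on the osculating $2$-plane $\Pi_p$ of $X$ at $p$, and then extract a contradiction from the containment $X \subset Y := \overline{\pi_L^{-1}(C)}$, a rank-$3$ quadric cone with vertex $L$.

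For a general hyperplane $H \supset L$ in $\PP^4$, the tangency $T_pX = L \subset H$ yields $(X,H)_p = 2$ (for $H$ not containing the osculating plane $\Pi_p$), while the remaining $d-2$ intersections lie off $L$ and correspond, via $\pi_L$, to the $f$-preimages of the general line $\ell := \pi_L(H \setminus L) \subset \PP^2$. Since $\ell$ meets $C$ in $2$ points and each has $\delta$ preimages, we obtain $d = \deg X = 2 + 2\delta \geq 6$. For the local count, resolve $\pi_L$ by blowing up $L$, under which $\Pi_p$ becomes the fiber of $\widetilde{\pi_L}$ over $q := f(p)$. In local coordinates one verifies that $(X,L)_p = 2$, $(X,\Pi_p)_p = 3$ (ordinary osculating contact at a general $p$), and that the ramification index at $p$ satisfies $e_p = (X,\Pi_p)_p - (X,L)_p = 1$. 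Summing fiber multiplicities in the divisor $f^{-1}(q)$ of degree $\delta$, one finds
\[
 \deg(X \cap \Pi_p) \;=\; 3 + (\delta - 1) \;=\; \delta + 2,
\]
so that $\Pi_p$ meets $X$ in at least one further point $r(p) \in X \cap \Pi_p \setminus L$ distinct from $p$, with $f(r(p)) = f(p)$.

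To close the argument, I would exploit the fact that the hyperplane $H_q := \pi_L^{-1}(T_qC)$ is the tangent hyperplane to the cone $Y$ along the ruling $\Pi_p$. Consequently $T_xY = H_q$ at every smooth point $x \in \Pi_p \setminus L$ of $Y$, so that $(X,H_q)_x \geq 2$ along all the further fiber points in $X \cap \Pi_p \setminus L$, while $(X, H_q)_p \geq 3$ already from $H_q \supset \Pi_p$. The divisor $H_q \cdot X$ on $X$ has degree $d = 2\delta + 2$, and the sum of the contact orders at $p$ and at the $\delta - 1$ further intersection points satisfies
\[
 d \;\geq\; (X,H_q)_p + 2(\delta - 1) \;\geq\; 3 + 2(\delta - 1) = 2\delta + 1.
\]
To reach a contradiction with $\delta \geq 2$ I would refine $(X, H_q)_p$ using the osculating filtration $L \subset \Pi_p \subset H_q$, identifying $H_q$ with the osculating $3$-plane of $X$ at $p$—because $H_q$ is canonically determined by the third-order datum $T_qC$ on the conic side, and this in turn encodes the third-order Taylor expansion of $X$ at $p$. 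The resulting strict inequality $(X, H_q)_p \geq 4$ gives $d \geq 2\delta + 3$, incompatible with $d = 2\delta + 2$.

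The main obstacle is precisely this last refinement: proving that $(X, H_q)_p = 4$ for a general $p$ whenever $f(X)$ is a smooth conic with $\delta \geq 2$. This requires carefully tracking how the third-order data of $X$ at $p$ transports, via the resolution of $\pi_L$, to the tangent direction of $C$ at $q = f(p)$, and taking care that $X$ is only assumed integral, so the osculating construction must be carried out on its smooth locus. If that refinement resists, an alternative would be to use the $1$-parameter family of secants $\overline{p, r(p)}$ through the tangent $L = T_pX$ produced in Step~2, together with the fact that these secants all lie in the cone $Y$, to force a Terracini- or trisecant-type degeneracy incompatible with $X$ being a non-degenerate integral curve in $\PP^4$.
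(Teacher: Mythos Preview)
Your approach is quite different from the paper's, and it has a genuine gap that appears to be fatal rather than merely technical.

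The core problem is the final arithmetic. Grant everything up to and including the refinement $(X,H_q)_p \ge 4$ (which, incidentally, is correct: your hyperplane $H_q$ really is the osculating $3$-plane at a general $p$, as a local computation with the standard parametrization $t\mapsto (t,t^2,t^3,t^4,\ldots)$ shows). The divisor $H_q\cdot X$ has degree $d$, and away from $L$ it coincides with the pullback $f^*(T_qC)$. Since $T_qC$ meets the conic $C$ only at $q$ with multiplicity $2$, one has $f^*(T_qC)=2\,g^*[q]$ where $g:X\to C$ has degree $\delta$; hence the total contribution at the $\delta-1$ fibre points other than $p$ is \emph{exactly} $2(\delta-1)$, not merely $\ge 2(\delta-1)$. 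Adding $(X,H_q)_p=4$ gives
\[
d \;\ge\; 4 + 2(\delta-1) \;=\; 2\delta+2 \;=\; d,
\]
an equality, not the claimed $d\ge 2\delta+3$. There is no slack anywhere: the local multiplicities you identify account for the entire divisor $H_q\cdot X$. So the contradiction never materialises, and pushing $(X,H_q)_p$ higher is not an option either, since for a general $p$ the osculating $3$-plane has contact of order exactly $4$. Your fallback suggestion (the family of secants $\overline{p\,r(p)}$ inside the cone) is not developed enough to evaluate.

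For comparison, the paper's argument is global and avoids any delicate intersection-multiplicity bookkeeping. If $f(X)$ were a conic, then $X$ lies on the rank-$3$ quadric cone $Q_L$ with vertex $L=T_pX$. Letting $p$ vary, one gets a one-parameter family of such quadrics $Q_{L_o}$ through $X$, each containing the tangential surface $\tau(X)$. Hence $\tau(X)$ sits in the intersection of two distinct quadrics, forcing $\deg\tau(X)\le 4$; since $\tau(X)$ is non-degenerate, $\deg\tau(X)\in\{3,4\}$. But $X\subset\mathrm{Sing}(\tau(X))$, so a general hyperplane section of $\tau(X)$ is a non-degenerate integral space curve of degree $3$ or $4$ with at least $d\ge 2\delta\cdot 1 + 2\ge 6$ singular points, which is impossible.
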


\begin{proof}
Let $\alpha  = \deg (f)\ge 2$ and $d = \deg (X)$. Assume that $f(X)$ is a conic. This is equivalent to $X$ being inside a quadric hypersurface $Q_L$ with vertex containing $L$. Since $X$ is non-degenerate, $Q_L$ is irreducible; $L$ is contained in its vertex and the latter cannot have dimension $\geq 2$. So $Q_L$ has vertex $L$.

Due to the generality assumption on $p$, for a general $o\in X_{\reg}$ with $L_o = T_o X$, the curve $X$ is contained in a quadric hypersurface $Q_{L_o}$, with vertex $L_o$. Since each $Q_{L_o}$ is irreducible, $Q_{L_o}$ and $Q_{L_{o'}}$ are distinct, if $L_o$ and $L_{o'}$ are distinct tangent lines. 

Let $U\subseteq X_{\reg}$ be the non-empty Zariski open subset of $X_{\reg}$ for which the quadric hypersurface $Q_{L_o}$ is defined. Set $\Phi = \cap _{o\in U} Q_{L_o}$. The tangential variety $\tau (X)$ of $X$ is a surface and $\tau(X)\subset \Phi$. Since $\Phi$ contains a surface containing $X$, 
it is non-degenerate and hence $\deg(\Phi)\geq 3$. Since $\Phi$ is contained in the intersection of two distinct quadrics, $\deg(\Phi)\leq 4$. Thus $\deg (\tau (X)) \in \{3,4\}$.  Let $H\subset \PP^4$ be a general hyperplane. Since $X \subseteq \mathrm{Sing}(\tau (X))$ (in characteristic zero), $\tau(X)\cap H$ is a non-degenerate integral space curve with at least $d\ge \alpha\cdot \deg(f(X))\geq 4\geq \deg(\tau(X))$ singular points, which is a contradiction.
\end{proof}

\begin{theorem}\label{d2}
Let $X\subset \PP^{3n+1}$ be an integral non-degenerate and non-defective variety of {\it type} {\bf IA} such that $\sigma _3(X)=\PP^{3n+1}$. Then $\Gamma _q(X)$ is a rational curve.
\end{theorem}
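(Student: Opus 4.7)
The plan is to specialize the strategy of Theorem~\ref{v1} to $s = 3$ and replace the $(\dagger)$-dependent ingredient (Lemma~\ref{v0}) by the unconditional Lemma~\ref{d1}, which is available precisely because the entry locus $\Gamma_q(X)$ lives in a $\PP^4$. First, I would run the usual dimension count: since $X$ is non-defective, $\dim \sigma_2(X) = 2n+1$ and Lemma~\ref{dimentrylocus} gives $\gamma(X) = 1$; by type~\textbf{IA} and Theorem~\ref{x5}(ii), $\ell(X) = (\gamma(X)+1)\cdot 3 - \gamma(X) - 1 = 4$. Hence $\Gamma_q(X) \subset \PP^4 := \langle \Gamma_q(X)\rangle$ is an irreducible curve (assuming, as in Definition~\ref{largeclass}, that $n \ge 2$).

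Next, following the tangential-projection setup of Theorem~\ref{v1}, I would pick a general triple $S = \{p_1, p_2, p_3\} \subset X_{\reg} \cap \Gamma_q(X)_{\reg}$ general both in $X$ and in $\Gamma_q(X)$; set $V = T_{p_1}X$ and $W = T_{p_1}\Gamma_q(X) \subseteq V$; and form $X' = \pi_V(X) \subset \PP^{2n}$. Terracini's lemma gives $\sigma_2(X') = \PP^{2n}$, so $r_{\gen}(X') = 2$ and $X'$ is non-degenerate. Let $D$ be the closure of $\pi_V(\Gamma_q(X))$. The plane-conic claim for $D$ then follows from the Theorem~\ref{v1} argument, using only type \textbf{IA} and the Trisecant lemma for $X' \subset \PP^{2n}$ (not $(\dagger)$): type~\textbf{IA} yields $D = \Gamma_z(X')$ for a general $z \in \PP^{2n}$, so $D$ cannot be a line (else $z$ would lie on $X'$, contradicting $r_{X'}(z) = r_{\gen}(X') = 2$), giving $\deg D \ge 2$; the Trisecant lemma applied to the general pair $\pi_V(\{p_2,p_3\}) \subseteq D \subseteq X'$ bounds $\deg D \le 2$. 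Since the conic $D$ spans a $\PP^2$, this forces $V \cap \langle \Gamma_q(X)\rangle = W$, and so $\pi_V|_{\Gamma_q(X)}$ is precisely the projection $\pi_W$ of $\Gamma_q(X) \subset \PP^4$ from its tangent line $W$, with image $D$.

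Finally, I would close the argument by applying Lemma~\ref{d1} to the curve $\Gamma_q(X) \subset \PP^4$ at $p = p_1$: since $p_1$ is general on $\Gamma_q(X)$ (by the generality of $S$) and the image $\pi_W(\Gamma_q(X)) = D$ is a conic, the contrapositive of Lemma~\ref{d1} forces $\pi_W|_{\Gamma_q(X)} : \Gamma_q(X) \to D$ to be birational. Rationality of the conic $D$ then yields the rationality of $\Gamma_q(X)$.

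The main obstacle is the bookkeeping in the second paragraph: one must check that the argument of Theorem~\ref{v1} identifying $D$ as a conic really does not invoke $(\dagger)$. Inspection of that proof shows that $(\dagger)$ enters only through the iterated application of Lemma~\ref{v0} ($s-2$ times) used to conclude birationality of the tangential projections; for $s = 3$ this reduces to one tangential projection of a curve in $\PP^4$, where Lemma~\ref{d1} drops in cleanly as a replacement for Lemma~\ref{v0}. The rest of the argument, namely the conic-degree bound via Trisecant and the exclusion of a line via type~\textbf{IA}, is unconditional.
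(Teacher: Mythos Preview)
Your proposal is correct and follows essentially the same approach as the paper: specialize the setup of Theorem~\ref{v1} to $s=3$, observe that the \emph{Claim} in that proof (showing the projected curve $D$ is a plane conic via type~\textbf{IA} and the Trisecant lemma) does not use $(\dagger)$, and then invoke Lemma~\ref{d1} in place of Lemma~\ref{v0} to obtain birationality of the tangential projection $\Gamma_q(X)\to D$. Your extra justification that $V\cap\langle\Gamma_q(X)\rangle=W$ (deduced a posteriori from $\dim\langle D\rangle=2$) is a welcome clarification of a step the paper leaves implicit.
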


\begin{proof}
As in Theorem \ref{v1}, $\Gamma_q(X)$ is an irreducible curve. Fix a general $S\subset X$ such that $\sharp (S) =3$. We repeat the proof of Theorem \ref{v1}. In this case, $\sharp (A) = 1$, say $A =\{a\}$, with $a\in \Gamma _q(X)_{\reg}$ general. Using the notations above, the {\it Claim} in the proof of Theorem \ref{v1} shows that $f(\Gamma_q(X))$ is a smooth conic, independently of Lemma \ref{v0}. Note that $\ell(X) = 2s-2 = 4$, which means $\langle \Gamma_q(X)\rangle = \PP^4$. We apply Lemma \ref{d1} to $\Gamma_q(X)$, which yields that $f$ is birational onto its image. Thus $\Gamma_q(X)$ is a rational curve.
\end{proof}

\begin{small}

\end{small}

\end{document}